\newcommand{\F}{\mathbb {F}}
\newtheorem{theorem}{Theorem}[section]
\newtheorem{definition}[theorem]{Definition}
\newtheorem{lemma}[theorem]{Lemma}
\newtheorem{corollary}[theorem]{Corollary}
\newtheorem{proposition}[theorem]{Proposition}
\newtheorem{remark}[theorem]{Remark}
\def \ord {\rm{ord}\,}
\begin{document}

\title[On existence of some special pair of primitive elements over $\mathbb{F}_{2^k}$]{On existence of some special pair of primitive elements over finite fields}

\author[C.\ Carvalho, J.P.G.\ Sousa, V.G.L.\ Neumann and 
G.\ Tizziotti]{C\'icero Carvalho, Jo\~ao Paulo Guardieiro Sousa, Victor 
G.L. Neumann and Guilherme Tizziotti}

\maketitle

\begin{abstract}
In this paper we generalize the results of Sharma, Awasthi and Gupta (see 
\cite{SAG}). We work over a field of any characteristic with $q = p^k$ elements 
and we give a sufficient condition for the existence of a primitive element 
$\alpha \in \mathbb{F}_{p^k}$ such that $f(\alpha)$ is also primitive in 
$\mathbb{F}_{p^k}$, where $f(x) \in \mathbb{F}_{p^k}(x)$ is a quotient of 
polynomials with some restrictions. We explicitly determine the values of $k$ 
for which such a pair exists for $p=2,3,5$ and $7$.
\end{abstract}

\section{Introduction}
Let $\mathbb{F}_q$ denote a finite field with $q$ elements, where $q = p^k$ and
$p$ is a prime. In this paper we present  results on the problem of finding
pairs $(\alpha, \beta)$ of elements in $\mathbb{F}_q$ which are related in some
way, and  are both
primitive, i.e.\ both generate the multiplicative group $\mathbb{F}_q^*$. The
origins of this problem may be traced back to a question asked by A.\ Brauer to
his former student E.\ Vegh about the existence of ``pairs of consecutive
primitive roots'' in $\mathbb{F}_p$, meaning, in the above notation, that
$\beta = \alpha + 1$. Vegh proved that if $p > 3$ and $\varphi(p - 1)/(p - 1) >
1/3$, where $\varphi$ is the Euler totient function, then there exists such a
pair (see \cite{vegh1}). Three years later he proved, among other results, that
if $p \equiv 1 \pmod{4}$ then the condition $\varphi(p - 1)/(p - 1) >
1/4$ is enough to guarantee the existence of such pair (see \cite{vegh2}).
In 1985, in a series of two papers (see \cite{Cohen1985a, Cohen1985b}) Cohen
extended the result, proving that if $q > 3$ and $q \not\equiv 7 \pmod{12}$
then there exists such a pair of primitive elements. From there the question
whether there exist pairs of primitive elements $(\alpha, \beta)$ where $\beta$
is a polynomial in $\alpha$ was a natural one. It was considered, especially
for polynomials of degree one and two, by several authors (see e.g.\ the survey
\cite{Cohen1993}) and in 2015 Cohen, Oliveira e Silva and Trudgian proved,
among other results, that
for every $q > 61$ there exists a pair $(\alpha, \beta)$ of primitive elements
with $\beta = a \alpha + b$, where $a, b \in \mathbb{F}_q$. For polynomials of
degree two, the most recent result is by Booker, Cohen and Sutherland (see
\cite{Cohen2019}), and it
states that for for every $q > 211$ there exists a pair $(\alpha, \beta)$ of
primitive elements
with $\beta = a \alpha^2  + b \alpha + c$, with $b^2 - 4 a c \neq 0$.

Of course, one can also consider  pairs $(\alpha, \beta)$ of primitive elements
such that $\beta$ is a rational function of $\alpha$, and recently several
authors have studied this problem. One of the simplest case
is trivial: $\beta = 1/\alpha$ is primitive if and only if $\alpha$ is
primitive. In 2012 Wang, Cao and Feng (see \cite{WCF})  proved, among other
results, that if $q = 2^{s n}$ then there exists a pair $(\alpha, \beta)$ of
primitive elements
such that $\beta = \alpha + 1/\alpha$ provided that $n$ is odd, $n \geq 13$ and
$s > 4$.
Two years later their result was generalized by Cohen (see \cite{Cohen}), who
proved, also assuming that $q$ is a power of 2, that if $q \geq 8$ then there
exists a pair $(\alpha, \beta)$ of
primitive elements
such that $\beta = \alpha + 1/\alpha$.  Also in 2014 Kapetanakis presented
necessary conditions for the existence of a pair
$(\alpha, \beta)$ of
primitive elements
such that $\beta =( a \alpha + b)/(c \alpha + d)$, with $a, b, c, d \in
\mathbb{F}_q$. In 2017 Anju and Sharma found necessary conditions for
existence of a pair
$(\alpha, \beta)$ of
primitive elements
such that $\beta =( a \alpha^2 + b \alpha + c)/(d \alpha + e)$, with $a, b, c,
d, e \in
\mathbb{F}_q$ and $q = 2^k$. In 2018 Sharma, Awasthi and Gupta, again working
over a finite field of characteristic two, determined conditions for the
existence of  a pair
$(\alpha, \beta)$ of
primitive elements
such that $\beta =( a \alpha^2 + b \alpha + c)/(d \alpha^2 + e \alpha + f)$,
with $a, b, c,
d, e, f \in
\mathbb{F}_{2^k}$.

In the present paper we generalize the results of Sharma, Awasthi and Gupta,
finding conditions that guarantee existence of  a pair
$(\alpha, \beta)$ of
primitive elements
such that $\beta$ is a quotient of polynomial expressions in $\alpha$. We work
over a field of any characteristic with $q = p^k$ elements. Our main result is
Theorem \ref{main teo} which is used to determine the values of $k$ for which
such a pair exists, although sometimes we also need Lemma \ref{lema3.5} to
determine this values. The next section contains the definition of the set 
$\Gamma_p(m_1,m_2)$, and basic concepts and results that will be used 
throughout the work. In Section \ref{main section}, we present the general 
results about the set $\Gamma_p(m_1,m_2)$. Finally, in Section \ref{examples} 
we 
determine the sets $\Gamma_p(3,2)$ for $p=2,3,5$ and $7$.

\section{Preliminaries}

Throughout the paper, $p$ is a prime, $k$ is a positive integer and
$\mathbb{F}_q$ will denote a finite field with $q=p^k$ elements, and we denote
by $\mathbb{N}$ the set of positive integers.

An element $\alpha\in\mathbb{F}_q$ is called
\textit{primitive} if $\alpha$ is a generator of the multiplicative group
$\mathbb{F}_q^*$, or equivalently, if the multiplicative order of $\alpha$ is $q-1$. A
pair $(\alpha,\beta) \in \mathbb{F}_q^2$ is a \textit{primitive pair} in
$\mathbb{F}_q$ if $\alpha$ and $\beta$ are primitive elements. It is clear that
$(\alpha,\beta) \in \mathbb{F}_q^2$ is a primitive pair if and only if
$(\alpha,\beta^{-1}) \in \mathbb{F}_q^2$ is a primitive pair.

%Given a positive integer $n$ and a polynomial $g \in \mathbb{F}_q [x]$, two coprimes polynomials $f_1$ and $f_2$ are called $(g^n,q)$-\textit{coprimes} if $g^n$ divides the product $f_1 f_2$ but $g^{n+1}$ not divides $f_1f_2$, and $\mbox{gcd}(n,q-1)=1$.

The following concepts will play a crucial role in what follows.

\begin{definition}
i) Let  $f_1, f_2 \in \mathbb{F}_q [x]$, we define  $\Lambda_q(f_1,f_2)$ as the set of pairs $(n,g) \in \mathbb{N} \times (\mathbb{F}_q [x]
\setminus \{x\})$ such that $\gcd(n,q-1) = 1$, $g$ is monic, irreducible, $g^n \mid f_1 f_2$ and  $g^{n+1} \nmid f_1 f_2$. \\
ii) Let $m_1,m_2 \in \mathbb{N}$, we define  $\Upsilon_{q} (m_1,m_2)$ as the set of rational functions $\frac{f_1}{f_2} \in \mathbb{F}_q(x)$
such that
$\deg (f_1) \leq m_1$,  $\deg (f_2) \leq m_2$,   $\gcd(f_1,f_2)=1$ and $\Lambda_{q}(f_1,f_2) \neq \emptyset$.\\
iii) Let $m_1,m_2 \in \mathbb{N}$, we define $\Gamma_p(m_1,m_2)$ as the set of positive integers $k$ such that
$\mathbb{F}_{p^k}$  contains an element $\alpha$ with $(\alpha, f(\alpha))$  a primitive pair  for all $f\in \Upsilon_{p^k} (m_1,m_2)$.
\end{definition}

In \cite{mersenne} and \cite{SAG}, the authors studied sets similar to 
$\Gamma_2(2,1)$ and $\Gamma_2(2,2)$, respectively. We comment on this 
similarity at the beginning of Section \ref{examples}.\\

The next result give us some properties of the sets above. 

\begin{proposition}\label{consequencias}
Let $p$ be a prime and let $k,\ell_1,\ell_2, m_1,m_2$ be positive integers. Then,
\begin{enumerate}
\item $\Upsilon_{p^k} (m_1,m_2) \subsetneqq \Upsilon_{p^k} (m_1 + \ell_1,m_2 + \ell_2)$;
\item $\Gamma_p(m_1,m_2) = \Gamma_p(m_2,m_1)$;
\item $\Gamma_p(m_1+\ell_1 ,m_2 + \ell_2) \subset \Gamma_p(m_1,m_2)$.
\end{enumerate}
\end{proposition}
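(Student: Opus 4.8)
The plan is to establish (1) first, deduce (3) from it essentially for free, and prove (2) by exploiting the inversion symmetry already recorded before the statement. In all three items the \emph{inclusions} are purely formal consequences of the definitions; the only genuine content is the strictness in (1) and the equality in (2).

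For (1), the inclusion $\Upsilon_{p^k}(m_1,m_2)\subseteq\Upsilon_{p^k}(m_1+\ell_1,m_2+\ell_2)$ is immediate: if $f_1/f_2$ lies in the left-hand set then $\deg f_1\le m_1\le m_1+\ell_1$ and $\deg f_2\le m_2\le m_2+\ell_2$, while the conditions $\gcd(f_1,f_2)=1$ and $\Lambda_{p^k}(f_1,f_2)\neq\emptyset$ do not mention $m_1,m_2$ at all. For strictness I would exhibit a single witness. Since $m_1,\ell_1\ge 1$ we have $m_1+\ell_1\ge 2$, and over any finite field there exists a monic irreducible polynomial $g$ of degree $m_1+\ell_1$. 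As $\deg g\ge 2>\deg x$ we have $g\neq x$ and $x\nmid g$, so the pair $(1,g)$ satisfies $g^1\mid g\cdot 1$, $g^2\nmid g\cdot 1$ and $\gcd(1,p^k-1)=1$; hence $\Lambda_{p^k}(g,1)\neq\emptyset$ and $g/1\in\Upsilon_{p^k}(m_1+\ell_1,m_2+\ell_2)$. On the other hand $g/1$ is already in lowest terms, and since the numerator and denominator degrees of a rational function written in lowest terms are invariants (the reduced representation being unique up to a scalar), no representation of $g/1$ can have numerator degree $\le m_1<m_1+\ell_1$; therefore $g/1\notin\Upsilon_{p^k}(m_1,m_2)$.

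For (2) the key observation is that $\Lambda_{p^k}(f_1,f_2)$ depends only on the product $f_1f_2$ and is therefore symmetric in $f_1,f_2$. Consequently the map $f=f_1/f_2\mapsto 1/f=f_2/f_1$ carries $\Upsilon_{p^k}(m_1,m_2)$ bijectively onto $\Upsilon_{p^k}(m_2,m_1)$: it swaps the two degree bounds, preserves $\gcd(f_1,f_2)=1$, and satisfies $\Lambda_{p^k}(f_2,f_1)=\Lambda_{p^k}(f_1,f_2)\neq\emptyset$. Now suppose $k\in\Gamma_p(m_1,m_2)$, witnessed by a primitive $\alpha$. Given any $h\in\Upsilon_{p^k}(m_2,m_1)$, write $h=1/f$ with $f=1/h\in\Upsilon_{p^k}(m_1,m_2)$; then $f(\alpha)$ is defined and primitive by hypothesis, so $f(\alpha)\neq 0$, whence $h(\alpha)=f(\alpha)^{-1}$ is defined, and $(\alpha,h(\alpha))=(\alpha,f(\alpha)^{-1})$ is a primitive pair by the equivalence recorded before the statement. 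Thus the same $\alpha$ shows $k\in\Gamma_p(m_2,m_1)$, giving $\Gamma_p(m_1,m_2)\subseteq\Gamma_p(m_2,m_1)$; exchanging the roles of $m_1$ and $m_2$ yields the reverse inclusion and hence equality.

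Finally (3) is a direct consequence of the inclusion proved in (1): if $k\in\Gamma_p(m_1+\ell_1,m_2+\ell_2)$ with witness $\alpha$, then $(\alpha,f(\alpha))$ is a primitive pair for every $f$ in the larger set $\Upsilon_{p^k}(m_1+\ell_1,m_2+\ell_2)$, and in particular for every $f$ in the subset $\Upsilon_{p^k}(m_1,m_2)$; so the same $\alpha$ witnesses $k\in\Gamma_p(m_1,m_2)$. The only step requiring real care is the strictness in (1): one must guarantee the existence of the irreducible witness (which holds because $m_1+\ell_1\ge 2$ and finite fields admit irreducibles of every degree) and, more importantly, justify that membership in $\Upsilon_{p^k}$ is decided by the reduced-form degrees rather than by some particular representation, so that a function of genuinely larger numerator degree cannot slip into the smaller set.
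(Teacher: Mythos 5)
Your proof is correct and follows essentially the same route as the paper: the inclusion in (1) is formal, strictness is witnessed by an irreducible polynomial of degree $m_1+\ell_1$ over denominator $1$, (2) follows from the inversion symmetry $(\alpha,\beta)\leftrightarrow(\alpha,\beta^{-1})$ together with the symmetry of $\Lambda_{p^k}$ in $f_1,f_2$, and (3) is immediate from the inclusion in (1). You simply spell out details the paper leaves implicit (that $g\neq x$, that $(1,g)\in\Lambda_{p^k}(g,1)$, and that the reduced-form degrees are well-defined), all of which check out.
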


\begin{proof}
\begin{enumerate}
\item The inclusion is obvious, and for the inequality it suffices to take $f_1$ to be an irreducible polynmial of degree $m_1+\ell_1$ and $f_2 = 1$.
\item This is a direct consequence of the fact that $(\alpha,\beta) \in \mathbb{F}_q^2$ is a primitive pair if and only if $(\alpha,\beta^{-1}) \in \mathbb{F}_q^2$ is a primitive pair.
\item It is a direct consequence of (1).
\end{enumerate}
\end{proof}

In what follows we will need the following result, which is a special case of  \cite[Theorem 5.5]{Fu}.
\begin{lemma}  \label{lema cota}
Let $h(x) \in \mathbb{F}_{q}(x)$ be a rational function. Write $h(x) =
\prod_{j=1}^{r} h_{j}(x)^{n_j}$, where $h_j(x) \in \mathbb{F}_{q}[x]$ are
irreducible polynomials and $n_j$ are nonzero integers. Let $\chi$ be a
multiplicative character of $\mathbb{F}_{q}$. Suppose that the rational
function $h(x)$ is not of the form $g(x)^{ord(\chi)}$
in $\mathbb{F}(x)$, where $\mathbb{F}$ is an algebraic closure of $\mathbb{F}_q$. Then we have
$$
\displaystyle \left|
\sum_{\substack{\alpha \in \mathbb{F}_q  \\ h(\alpha)\neq 0 , h(\alpha)\neq
\infty}} \chi(h(\alpha)) \right|
\leq
\left(\sum_{j=1}^{r} \deg(h_j)-1 \right) q^{\frac{1}{2}}.
$$
\end{lemma}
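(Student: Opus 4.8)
The plan is to obtain this estimate as the specialization of the general character-sum bound \cite[Theorem 5.5]{Fu} to the pair $(\chi,h)$ at hand; the substantive work is not in the analytic estimate itself but in verifying that the hypotheses of that theorem are met and that its conductor data collapses to the stated quantity $\sum_{j=1}^{r}\deg(h_j)-1$. First I would recall the mechanism behind all such bounds. To $(\chi,h)$ one attaches the $L$-function
$$
L(t)=\exp\left(\sum_{n\geq 1}\frac{S_n}{n}\,t^n\right),\qquad
S_n=\sum_{\substack{\alpha\in\mathbb{F}_{q^n}\\ h(\alpha)\neq 0,\,\infty}}\chi\bigl(N_{\mathbb{F}_{q^n}/\mathbb{F}_q}(h(\alpha))\bigr),
$$
so that $S_1$ is exactly the sum we must bound. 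The hypothesis that $h$ is not of the form $g^{\ord(\chi)}$ over the algebraic closure is what guarantees that $\chi\circ h$ is geometrically nontrivial; this is the precise condition that prevents $L(t)$ from acquiring a trivial reciprocal zero and forces it to be a genuine polynomial in $t$.

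The core input is then Weil's Riemann Hypothesis for curves, on which \cite{Fu} rests: $L(t)$ is a polynomial all of whose reciprocal roots $\omega_1,\dots,\omega_D$ have absolute value $q^{1/2}$. Taking the logarithmic derivative and reading off the coefficient of $t$ yields $S_1=-\sum_{i}\omega_i$, whence $|S_1|\leq D\,q^{1/2}$, where $D=\deg L$. Thus the entire bound reduces to the single identity $D=\sum_{j=1}^{r}\deg(h_j)-1$, and once this is established the lemma follows immediately after discarding the finitely many $\alpha$ with $h(\alpha)\in\{0,\infty\}$, which are already excluded from the sum.

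The delicate, and therefore main, obstacle is precisely this degree count. The degree of $L$ is computed by an Euler-characteristic (Grothendieck--Ogg--Shafarevich) argument on $\mathbb{P}^1$: for the rank-one object attached to $\chi\circ h$ it equals the degree of the conductor minus $2$, and one must check that the conductor is supported exactly at the $\sum_{j}\deg(h_j)$ distinct zeros and poles of $h$, together with the point at infinity, each tamely ramified, so that the total leaves $\sum_{j}\deg(h_j)-1$. Handling the contribution at infinity, and the possibility that some exponent $n_j$ is divisible by $\ord(\chi)$ (so that the corresponding branch point drops out of the ramification), is where real care is required. Since this bookkeeping is exactly what \cite[Theorem 5.5]{Fu} carries out in full generality, the efficient route is to match notation with that theorem and invoke it directly rather than rerun the ramification analysis by hand.
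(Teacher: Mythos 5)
Your proposal is correct and ends up exactly where the paper does: the paper offers no independent argument for this lemma, stating it simply as a special case of \cite[Theorem 5.5]{Fu}, which is precisely the route you take after verifying that the non-$\ord(\chi)$-th-power hypothesis and the degree count $\sum_{j}\deg(h_j)-1$ match the conductor data of that theorem. Your supplementary sketch of the underlying $L$-function and Weil Riemann Hypothesis mechanism is accurate but goes beyond what the paper records.
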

%
%\medskip
%
%A (multiplicative) character of $\mathbb{F}_q^*$ of order $d$ will be denoted
%by $\chi_d$.
%
%\begin{remark} \label{obs ordem}
%By \cite[Th. 1.15]{LN} we know that there are $\phi(d)$ characters of
%$\mathbb{F}_q^*$ of order $d$, where $\phi$ is the Euler's function.
%\end{remark}
%
%For more details about characters of finite fields see \cite[Chapter 5]{LN}.
%

\begin{definition}
Let  $s$ by a divisor of $q-1$,
an element $\alpha\in\mathbb{F}_q^*$ is called \textit{$s$-free} if,
for any $d \in \mathbb{N}$ such that $d\, |\, s$ and $d \neq 1$,
 there is no $\beta\in\mathbb{F}_q$ satisfying $\beta^d = \alpha$.
\end{definition}

Thus, if $\alpha \in\mathbb{F}_q^*$ is $s$-free then  $\alpha$ cannot be
a $d$-th power of an element, where $d\neq1$ and $d|s$.
Next, we present some easy remarks that we will use in what follows.

\begin{remark} \label{obs s_free}
Let $\alpha\in\mathbb{F}_q^*$, then:

\begin{enumerate}

\item $\alpha$ is primitive if, and only if, $\alpha$ is $(q-1)$-free;

\item if $\alpha$ is $s$-free for some integer $s$, then $\alpha$ is $e$-free, for any $e \, | \, s$;

\item if $\alpha$ is $s_1$-free and $s_2$-free, then $\alpha$ is $\mathrm{lcm}(s_1,s_2)$-free;

\item let $p_1, \ldots, p_n$ be primes, then $\alpha$ is $(p_1 .\cdots. p_n)$-free if and only if $\alpha$ is  $(p_{1}^{\alpha_1} .\cdots. p_{n}^{\alpha_n})$-free, where $\alpha_i > 0$ for all $i$.

\end{enumerate}

\end{remark}

From \cite{CH2} we get that the characteristic function $\rho_s$ of the set of $s$-free elements is given by
\begin{equation} \label{funcao caracteristica}
\alpha\mapsto\theta(s)\sum_{d|s}\frac{\mu(d)}{\phi(d)}\sum_{\mathrm{ord}(\chi) = d}\chi(\alpha),
\end{equation}
where $\theta(s):=\frac{\phi(s)}{s}$, $\mu$ is the Moebius's function, and $\mathrm{ord}(\chi)$ denotes the order of the multiplicative character $\chi$.

\section{Main results} \label{main section}

Let $m_1$ and $m_2$ be positive integers, our aim is to determine for
which $k$ there exists an  $\alpha \in \mathbb{F}_{p^k}$ such that  $(\alpha, f(\alpha))$ is a primitive pair for all $f \in \Upsilon_{p^k}(m_1,m_2)$.
For this we will need the following concept.

\begin{definition}
Let $q = p^k$ and let  $l_1$ and $l_2$ be divisors of $q - 1$.
Given $f\in \Upsilon_{q}(m_1,m_2)$ we will denote by $N_f(l_1,l_2)$ the number of pairs $(\alpha,f(\alpha))$ such that $\alpha \in \mathbb{F}_q$ is
$l_1$-free and $f(\alpha)$ is $l_2$-free.
\end{definition}

Thus, in order to have $k \in \Gamma_p(m_1,m_2)$, we must have $N_f(q-1,q-1)>0$ for all $f\in \Upsilon_{q}(m_1,m_2)$.

For an integer $\ell$, we denote by $\omega(\ell)$ and $W(\ell)$ the number of prime divisors of $\ell$ and the
number of square-free divisors of $\ell$, respectively, clearly one has $W(\ell)$=$2^{\omega(\ell)}$.

\begin{theorem} \label{main teo}
	Let $f=\frac{f_1}{f_2}\in \Upsilon_{q}(m_1,m_2)$, with $q \geq 4$. If $q^{\frac{1}{2}}>(m_1+m_2)W(l_1)W(l_2)$, then $N_f(l_1,l_2)>0$.
\end{theorem}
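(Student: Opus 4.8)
The plan is to estimate $N_f(l_1,l_2)$ by a character-sum argument, using the characteristic function $\rho_s$ of the $s$-free elements from (\ref{funcao caracteristica}) to isolate a main term of size $\approx q$ and to control the rest with Lemma \ref{lema cota}. Writing $D=\{\alpha\in\mathbb{F}_q : \alpha f_1(\alpha)f_2(\alpha)\neq 0\}$ for the set of admissible $\alpha$, one has
$$N_f(l_1,l_2)=\sum_{\alpha\in D}\rho_{l_1}(\alpha)\,\rho_{l_2}(f(\alpha)).$$
Substituting (\ref{funcao caracteristica}) and interchanging the sums gives
$$N_f(l_1,l_2)=\theta(l_1)\theta(l_2)\sum_{d_1\mid l_1}\sum_{d_2\mid l_2}\frac{\mu(d_1)\mu(d_2)}{\phi(d_1)\phi(d_2)}\sum_{\ord(\chi_1)=d_1}\sum_{\ord(\chi_2)=d_2}S(\chi_1,\chi_2),$$
where $S(\chi_1,\chi_2)=\sum_{\alpha\in D}\chi_1(\alpha)\chi_2(f(\alpha))$. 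First I would fix a character $\psi$ of order $q-1$ and write $\chi_1=\psi^{a_1}$, $\chi_2=\psi^{a_2}$, so that on $D$ one has $\chi_1(\alpha)\chi_2(f(\alpha))=\psi(h(\alpha))$ with $h=x^{a_1}f_1^{a_2}f_2^{-a_2}$.

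The term $d_1=d_2=1$ (that is, $\chi_1=\chi_2=\chi_0$) gives the main contribution $\theta(l_1)\theta(l_2)\,|D|$, and $|D|\geq q-1-(m_1+m_2)$, since we discard only $\alpha=0$ and the at most $m_1+m_2$ roots of $f_1f_2$. The crucial step is to show that for every $(\chi_1,\chi_2)\neq(\chi_0,\chi_0)$ the rational function $h$ is \emph{not} of the form $g(x)^{q-1}$ in $\overline{\mathbb{F}_q}(x)$, so that Lemma \ref{lema cota} applies. Here the hypothesis $\Lambda_q(f_1,f_2)\neq\emptyset$ is exactly what is needed: choosing $(n,g)\in\Lambda_q(f_1,f_2)$, the monic irreducible $g$ satisfies $g\neq x$ and divides $f_1f_2$ to the exact power $n$ with $\gcd(n,q-1)=1$; since $\gcd(f_1,f_2)=1$ it divides precisely one of $f_1,f_2$, and because $g\neq x$ the factor $x^{a_1}$ does not interfere, so the multiplicity of $g$ in $h$ equals $\pm n a_2$. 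If $h$ were a $(q-1)$-th power this multiplicity would be divisible by $q-1$, which by $\gcd(n,q-1)=1$ forces $(q-1)\mid a_2$, hence $\chi_2=\chi_0$; a short separate look at the multiplicity of $x$ then rules out the remaining case $\chi_2=\chi_0,\ \chi_1\neq\chi_0$. I expect this verification, and in particular the role of the condition $g\neq x$ built into the definition of $\Lambda_q$, to be the main obstacle.

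Granting this, Lemma \ref{lema cota} bounds each non-principal $S(\chi_1,\chi_2)$ by $\big(\deg(\mathrm{rad}(xf_1f_2))-1\big)q^{1/2}\leq (m_1+m_2)q^{1/2}$; the slight mismatch between $D$ and $\{\alpha:h(\alpha)\neq0,\infty\}$ occurs only when $\chi_1=\chi_0$ and contributes a harmless $O(1)$ correction at $\alpha=0$. Using $\sum_{d\mid\ell}|\mu(d)|=W(\ell)$, the weighted count $\sum_{(d_1,d_2)\neq(1,1)}|\mu(d_1)\mu(d_2)|$ of non-principal pairs equals $W(l_1)W(l_2)-1$, so
$$N_f(l_1,l_2)\geq \theta(l_1)\theta(l_2)\Big(q-(m_1+m_2)\big(W(l_1)W(l_2)-1\big)q^{1/2}-(m_1+m_2+1)\Big).$$
Finally I would check that the hypothesis $q^{1/2}>(m_1+m_2)W(l_1)W(l_2)$ makes the bracket positive: writing $Q=q^{1/2}$, $M=m_1+m_2$ and $W=W(l_1)W(l_2)$, one gets $Q^2-M(W-1)Q>MQ$, and since $q\geq4$ gives $Q\geq2$ while $m_1,m_2\geq1$ gives $M\geq2$, the remaining estimate $MQ-(M+1)\geq M-1>0$ closes the argument, yielding $N_f(l_1,l_2)>0$.
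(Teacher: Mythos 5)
Your proposal is correct and follows essentially the same route as the paper: the same character-sum expansion of $N_f(l_1,l_2)$ via the characteristic function, the same use of a pair $(n,g)\in\Lambda_q(f_1,f_2)$ (with $g\neq x$ and $\gcd(n,q-1)=1$) to rule out $h$ being a $(q-1)$-th power so that Lemma \ref{lema cota} applies, and the same final bound $\theta(l_1)\theta(l_2)\bigl(q-(m_1+m_2+1)-(m_1+m_2)(W(l_1)W(l_2)-1)q^{1/2}\bigr)$. The only cosmetic difference is that the paper treats the case $d_1\neq1$, $d_2=1$ by orthogonality instead of by the lemma, which is what your ``mismatch'' bookkeeping amounts to.
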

\begin{proof}
Let $f=\frac{f_1}{f_2}\in \Upsilon_{q}(m_1,m_2)$ and let
$$
S_f:=\{ \beta \in \mathbb{F}_{q} \; \colon \; f_1(\beta) =0 \mbox{ or } f_2(\beta) =0 \} \cup \{0\}.
$$
 Using the characteristic functions $\rho_{l_1}$ and $\rho_{l_2}$ we
 have
 $$
 N_f(l_1,l_2)=\sum_{\alpha\in\mathbb{F}_{q} \backslash
 S_f}\rho_{l_1}(\alpha)\rho_{l_2}(f(\alpha)),
 $$
 and from the formulas for these functions we get
	\begin{equation}\label{eq1}
	N_f(l_1,l_2)=\theta(l_1)\theta(l_2)\sum_{d_1|l_1, d_2|l_2}\frac{\mu(d_1)\mu(d_2)}{\phi(d_1)\phi(d_2)}
	\sum_{\substack{\ord(\chi_1)=d_1 \\ \ord(\chi_2)=d_2}}
	\tilde{\chi}(\chi_{1},\chi_{2}),
	\end{equation}
	where
$\tilde{\chi}(\chi_{1},\chi_{2}):=\sum\limits_{\alpha\in\mathbb{F}_{q}\backslash S_f}\chi_{1}(\alpha)\chi_{2}(f(\alpha))$.

Let $\chi_1$ and $\chi_2$ be multiplicative characters of orders $d_1$ and
$d_2$, respectively, where $d_1 \, |\, l_1$ and $d_2 \, | \, l_2$.
Let $i \in \{1, 2\}$,
it is
well-known (see e.g. \cite[Thm.\ 5.8]{LN}) that there exists a character
$\chi$  of order $q-1$ and and integer
$n_i \in\{0,1,...,q-2\}$ such that $\chi_{i}(\alpha)=\chi(\alpha^{n_i})$
for all $\alpha \in \mathbb{F}_{q}^*$, and observe that
$n_i=0$ if and only if $d_i=1$.
%
%Fixemos um par de caracteres $\chi_1, \chi_2$ de ordem $d_1,d_2$
%respectivamente, onde $d_1,d_2$ s\~ao inteiros positivos que satisfazem
%$d_1 \mid l_1$ e $d_2 \mid l_2$.	
%From \cite[Thm.\ 5.8]{LN}, we know that there are $n_1, n_2\in\{0,1,...,q-2\}$
%such that $\chi_{i}(\alpha)=\chi(\alpha^{n_i})$ for $i=1,2$
%	and for all $\alpha \in \mathbb{F}_{q}^*$, where $\chi$ is a fixed
%character of order $q-1$. Observe that $n_i=0$ if and only if $d_i=1$.
Hence,
\begin{eqnarray}
	\tilde{\chi}(\chi_{1},\chi_{2}) & = &\sum_{\alpha\in\mathbb{F}_{q}\setminus S_f}\chi(\alpha^{n_1}f_1(\alpha)^{n_2}f_2(\alpha)^{-n_2}) \nonumber \\
	& = & \sum_{\alpha\in\mathbb{F}_{q}\setminus S_f}\chi(h(\alpha)), \nonumber
\end{eqnarray}
where $h(x)=x^{n_1}f_1(x)^{n_2}f_2(x)^{-n_2}$.
To find a bound for $N_f(l_1,l_2)$ we will
bound the above summation
according to the values of $d_1$ and $d_2 $. We consider three cases.

%
%
%$n_2=0$ (and $d_2=1$). This implies that
%$x^{n_1}g_2(x)^{q-1}=g_1^{q-1}(x)$, which is possible only if $n_1=0$ (and so
%$d_1=1$).
%
%In a similar way if $t(x)^n$ is a factor of $f_1(x)$, we conclude that $n_1=0$
%(and $d_1=1$), which implies
%$n_2=0$ (and $d_2=1$).
%
%
%To give a bound for the number $N_f(l_1,l_2)$ we will separate the sums given
%in
%\eqref{eq1} in five cases depending on the possible values of $d_1$ and $d_2$.
%
%Calculemos $\tilde{\chi}(\chi_{1},\chi_{2})$ em fun\c{c}\~ao dos valores de
%$d_1$ e $d_2$.

\begin{enumerate}
\item[(i)] We first consider the case where $d_1=1$ and $d_2=1$.
Then $n_1=0$, $n_2=0$ and $h=1$, so that
$$
\tilde{\chi}(\chi_{1},\chi_{2}) = \sharp (\mathbb{F}_q \setminus S_f) \geq q - (m_1+m_2+1).
$$

\item[(ii)] Now we deal with the case where $d_1\neq 1$ and $d_2=1$.
Then  $n_2=0$ and $h(\alpha)=\alpha^{n_1}$.
Observe that
$\displaystyle \sum_{\alpha \in \mathbb{F}_q^*}\chi(\alpha^{n_1})= \sum_{\alpha
\in \mathbb{F}_q^*}\chi_1(\alpha)= 0$, so that
\begin{eqnarray}
\left| \tilde{\chi}(\chi_{1},\chi_{2}) \right| & =&
\left| \sum_{\alpha \in \mathbb{F}_q^*}\chi(\alpha^{n_1})
-
\sum_{\alpha\in\mathbb{F}_{q}\setminus S_f}\chi(\alpha^{n_1})
\right| =
\left| \sum_{\alpha \in S_f\setminus\{0\}}\chi(\alpha^{n_1}) \right|
\nonumber \\
& \leq &(m_1+m_2) < (m_1+m_2)q^{\frac{1}{2}}.
\nonumber
\end{eqnarray}

\item[(iii)] Lastly we consider the case where $d_2 \neq 1$, so that $n_2 \neq
0$. To bound
$\tilde{\chi}(\chi_{1},\chi_{2})$ we want to use Lemma \ref{lema cota}, and we
start by showing that indeed we can use it. So we assume by means of absurd
that $h(x)=\left(
\frac{g_1(x)}{g_2(x)} \right)^{q-1}$
for some
$g_1(x),g_2(x)\in\mathbb{F}[x]$, with $\textrm{deg}(g_1)=r_1$,
$\textrm{deg}(g_2)=r_2$, and $\mbox{gcd}(g_1,g_2)=1$, then
\begin{equation*}
x^{n_1}f_1(x)^{n_2}g_2(x)^{q-1}=f_2(x)^{n_2}g_1^{q-1}(x).
\end{equation*}
Since $\frac{f_1(x)}{f_2(x)} \in \Upsilon_{q}
(m_1,m_2)$, there exists an
irreducible monic polynomial $t(x) \in \mathbb{F}_{q}[x]$, $t(x)\neq x$ and a
positive integer $n$
with $\mbox{gdc}(n,q-1)=1$ such that
$t(x)^n$ appears in the factorization of either $f_1(x)$ or $f_2(x)$. Let's
suppose that $t(x)^n$ appears in the factorization of $f_2(x)$, and let
$\tilde{t}(x)$ be an irreducible factor of $t(x)$ in $\mathbb{F}[x]$. Clearly
$\tilde{t}(x)$ has degree one, $\tilde{t}(x) \neq x$ and since $\mathbb{F}_q$
is a
perfect field we
know that $\tilde{t}(x)$ appears with multiplicity one in the factorization of
$t(x)$ in $\mathbb{F}[x]$.
Since $f_1(x)$ and $f_2(x)$ are coprime in $\mathbb{F}_q[x]$ they are also
coprime in  $\mathbb{F}[x]$ so
$\tilde{t}(x)^{n n_2}$ appears in the factorization of $g_2(x)^{q-1}$. From
this one may conclude that
$q-1 \, | \, n n_2$, and from $\gcd(n,q-1)=1$ we get $q-1 \, | \, n_2$, a
contradiction. So we must have that
$t(x)^n$ appears in the factorization of $f_1(x)$, and reasoning as above again
we
conclude that $q-1\, | \, n_2$, which is impossible. Thus, if $d_2 \neq 1$ then
$n_2 \neq 0$ and
we get that $h(x)$ is not of the form $g(x)^{q - 1}$
in $\mathbb{F}(x)$.

Let $T_h$ be the set of $\beta \in \mathbb{F}_q$ such that $h(\beta) = 0$ or
$h(\beta)$ is not defined. If $0 \in T_h$ then $T_h = S_f$ and from Lemma
\ref{lema cota} we have
$$
\left| \tilde{\chi}(\chi_{1},\chi_{2}) \right|
=
\left| \sum_{\alpha\in\F_{q}\setminus S_f}\chi(h(\alpha))
\right| =
\left| \sum_{\alpha\in\F_{q}\setminus T_h}\chi(h(\alpha))
\right|
\leq (m_1 + m_2) q^{\frac{1}{2}}.
$$
If $0 \notin T_h$ then %$n_1 = 0$ and
$$
\left| \tilde{\chi}(\chi_{1},\chi_{2}) \right|
=
\left| \sum_{\alpha\in\F_{q}\setminus S_f}\chi(h(\alpha))
\right| =
\left| \sum_{\alpha\in\F_{q}\setminus T_h}\chi(h(\alpha)) - \chi(h(0))
\right|
\leq (m_1 + m_2 - 1) q^{\frac{1}{2}} + 1.
$$
so anyway we get $| \tilde{\chi}(\chi_{1},\chi_{2}) | \leq (m_1 + m_2)
q^{\frac{1}{2}}$.
\end{enumerate}

Now we use the above estimates to bound $ N_f(l_1,l_2)$. In the
second inequality below we use that
 there are $\phi(d_1)$ characters of order $d_1$ and $\phi(d_2)$
characters of order $d_2$, so we have $\phi(d_1)\phi(d_2)$ pairs
of such characters. From \eqref{eq1} and what we did above, we get
\[
\begin{split}
N_f(l_1,l_2) &\geq
\theta(l_1)\theta(l_2)
\left( q-(m_1+m_2+1) -
\sum_{\substack{d_1|l_1, d_2|l_2 \\ (d_1,d_2) \neq
(1,1)}}\frac{|\mu(d_1)| |\mu(d_2)|}{\phi(d_1)\phi(d_2)}
	\sum_{\substack{\ord(\chi_1)=d_1 \\ \ord(\chi_2)=d_2}}
	\left| \tilde{\chi}(\chi_{1},\chi_{2}) \right| \right) \\
&\geq
\theta(l_1)\theta(l_2)
\left( q-(m_1+m_2+1) -
\sum_{\substack{d_1|l_1, d_2|l_2 \\ (d_1,d_2) \neq
(1,1)}}\frac{|\mu(d_1)| |\mu(d_2)|}{\phi(d_1)\phi(d_2)}
	\sum_{\substack{\ord(\chi_1)=d_1 \\ \ord(\chi_2)=d_2}}
	(m_1+m_2)q^{\frac{1}{2}} \right) \\
&\geq
\theta(l_1)\theta(l_2)
\left(q-(m_1+m_2+1)-(m_1+m_2)q^{\frac{1}{2}}
\sum_{\substack{d_1|l_1, d_2|l_2 \\ (d_1,d_2) \neq (1,1)}}
|\mu(d_1)||\mu(d_2)|
\right)
\end{split}
\]
The last sum is equal to the number of pairs $(d_1,d_2)\neq(1,1)$ such that
$d_1\, |\, l_1$ and $d_2\, |\, l_2$ with $d_1$ and $d_2$ square-free, since the
Moebius function returns $0$ in the other cases, so
\begin{equation} \label{eq W}
\displaystyle \sum_{\substack{d_1|l_1, d_2|l_2 \\ (d_1,d_2) \neq (1,1)  }}|\mu(d_1)||\mu(d_2)| = W(l_1)W(l_2)-1.
\end{equation}
Therefore, we conclude that
\begin{equation}\label{eq desigualdade N}
\displaystyle N_f(l_1,l_2) > \theta(l_1)\theta(l_2)
\left(
q-(m_1+m_2+1)-(m_1+m_2)q^{\frac{1}{2}}(W(l_1)W(l_2)-1)
\right).
\end{equation}
Thus, if $q \geq (m_1+m_2+1)+(m_1+m_2)q^{\frac{1}{2}}(W(l_1)W(l_2)-1)$, then
$N_f(l_1,l_2)>0$. Writing
\[
\begin{split}
(m_1+m_2+1) + &(m_1+m_2)q^{\frac{1}{2}}(W(l_1)W(l_2)-1) \\ &=
(m_1+m_2)q^{\frac{1}{2}}W(l_1)W(l_2) - q^{\frac{1}{2}}\left( (m_1+m_2) -
q^{\frac{-1}{2}} (m_1+m_2+1)\right)
\end{split}
\]
and noting that $(m_1+m_2) -
q^{\frac{-1}{2}} (m_1+m_2+1) \geq 0$ if and only if
$(m_1 + m_2) (q^{\frac{1}{2}} - 1) \geq 1$ we observe that, for a fixed $q$,
the least value on the left occurs when $m_1 + m_2 = 1$, and then we must have
$q \geq 4$. Thus, if $q \geq 4$ and
$q^{\frac{1}{2}}\geq (m_1+m_2)W(l_1)W(l_2)$ we have that $N_f(l_1,l_2)>0$.
\end{proof}

\begin{corollary}\label{mainresult}
If $q \geq 4$ and
$q^{\frac{1}{2}} \geq (m_1 + m_2) W(q-1)^2$ then $k\in  \Gamma_p(m_1,m_2)$.
\end{corollary}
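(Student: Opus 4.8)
The plan is to obtain the corollary as the special case $l_1 = l_2 = q-1$ of Theorem \ref{main teo}, so that essentially all of the analytic work has already been carried out and only a translation of hypotheses and conclusions remains.

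First I would recall, via Remark \ref{obs s_free}(1), that an element of $\mathbb{F}_q^*$ is $(q-1)$-free precisely when it is primitive. Consequently, for any $f \in \Upsilon_q(m_1,m_2)$ the quantity $N_f(q-1,q-1)$ counts exactly the elements $\alpha \in \mathbb{F}_q$ for which both $\alpha$ and $f(\alpha)$ are primitive, i.e.\ for which $(\alpha, f(\alpha))$ is a primitive pair. By the observation following the definition of $N_f$, establishing $N_f(q-1,q-1) > 0$ for every $f \in \Upsilon_q(m_1,m_2)$ is exactly what is required to conclude that $k \in \Gamma_p(m_1,m_2)$; thus the corollary reduces to a uniform positivity statement for $N_f$ over all admissible $f$.

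Next I would apply Theorem \ref{main teo} with $l_1 = l_2 = q-1$. Since $W(l_1)W(l_2) = W(q-1)^2$, the hypothesis $q \geq 4$ together with $q^{\frac{1}{2}} \geq (m_1+m_2)W(q-1)^2$ is precisely the condition under which the theorem guarantees $N_f(l_1,l_2) > 0$. Here one should note a small bookkeeping point: the displayed statement of Theorem \ref{main teo} is phrased with a strict inequality, but its proof in fact closes under the non-strict hypothesis $q^{\frac{1}{2}} \geq (m_1+m_2)W(l_1)W(l_2)$ (this is exactly the final line of that proof), so the non-strict bound appearing in the corollary is legitimate. Because the conclusion of the theorem holds for every $f \in \Upsilon_q(m_1,m_2)$ and the constants $m_1, m_2, q$ (hence the bound) do not depend on $f$, we obtain $N_f(q-1,q-1) > 0$ uniformly in $f$.

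Combining these two steps yields the result. The argument carries no real obstacle of its own: all of the difficulty is contained in Theorem \ref{main teo}, and in Lemma \ref{lema cota} that underlies it. The only points that genuinely need care are the two just mentioned, namely the identification of $(q-1)$-freeness with primitivity and the matching of the strict-versus-non-strict inequality, both of which are routine. If anything, the subtle part is conceptual rather than computational: one must read the definition of $\Gamma_p(m_1,m_2)$ as requiring $N_f(q-1,q-1)>0$ for each $f$ separately, which is exactly the form that Theorem \ref{main teo} delivers.
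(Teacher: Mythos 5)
Your proposal is correct and is exactly the argument the paper intends (the corollary is stated without proof, being the specialization $l_1=l_2=q-1$ of Theorem \ref{main teo} combined with the observation that $N_f(q-1,q-1)>0$ for all $f\in\Upsilon_q(m_1,m_2)$ means $k\in\Gamma_p(m_1,m_2)$). Your remark about the strict versus non-strict inequality is a legitimate and careful reading of the end of the theorem's proof.
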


The next two results, and their proofs, are similar, respectively, to \cite[Lemma 3.3]{CH3} and
\cite[Theorem 3.8]{Cohen}.

%Next we obtain a similar result of the sieving Lemma 3.4 of \cite{mersenne} or Lemma 3.3 of \cite{CH3}, and the proof follows the reasoning presented in \cite{CH3}.

\begin{lemma}\label{lemmasieve}
	Let $\ell$ be a divisor of $q-1$ and let $\{p_1,...,p_r\}$ be the set of all primes which divide $q-1$ but do not divide $\ell$. Then
\begin{equation}\label{sieve}
N_f(q-1,q-1)\geq\sum_{i=1}^{r}N_f(p_i \ell,\ell)+\sum_{i=1}^{r}N_f(\ell,p_i\ell)-(2r-1)N_f(\ell,\ell).
\end{equation}
\end{lemma}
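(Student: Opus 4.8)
The plan is to establish the sieve inequality \eqref{sieve} by a standard inclusion–exclusion argument on the $s$-free conditions, exploiting the multiplicative structure of $q-1$. The key observation is that since $\{p_1,\dots,p_r\}$ is precisely the set of primes dividing $q-1$ but not $\ell$, we have $q - 1 = \mathrm{lcm}(p_1\ell, p_2\ell, \dots, p_r\ell)$ up to prime powers; combined with Remark~\ref{obs s_free}(4), being $(q-1)$-free is equivalent to being $(p_1\cdots p_r \ell)$-free, which by Remark~\ref{obs s_free}(3) is equivalent to being simultaneously $p_i\ell$-free for all $i$. Thus the event ``$\alpha$ is $(q-1)$-free'' decomposes as the intersection of the $r$ events ``$\alpha$ is $p_i\ell$-free,'' each of which refines the baseline event ``$\alpha$ is $\ell$-free.''

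First I would fix $f$ and, for each $\alpha$, consider the indicator that $\alpha$ is $p_i\ell$-free (call this condition $A_i$ on the first coordinate) and that $f(\alpha)$ is $p_i\ell$-free (condition $B_i$ on the second coordinate). By the remarks above, $N_f(q-1,q-1)$ counts pairs satisfying $\bigcap_i A_i \cap \bigcap_i B_i$. The sieve then rests on the elementary set-theoretic bound: for events $E_1,\dots,E_m$ inside a common sample space $U$, one has
\[
\sharp\Big(\bigcap_{j=1}^{m} E_j\Big) \geq \sum_{j=1}^{m}\sharp(E_j) - (m-1)\,\sharp(U),
\]
which is just Bonferroni/the union bound applied to the complements. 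Here the ambient set $U$ is the set of pairs with $\alpha$ both $\ell$-free and $f(\alpha)$ $\ell$-free, so $\sharp(U) = N_f(\ell,\ell)$, and the $m = 2r$ refining events are the $A_i$ (giving $N_f(p_i\ell,\ell)$, since imposing $p_i\ell$-freeness on the first coordinate while keeping $\ell$-freeness on the second is exactly this count) and the $B_i$ (giving $N_f(\ell,p_i\ell)$). Substituting $m = 2r$ yields the coefficient $-(2r-1)$ on $N_f(\ell,\ell)$, matching \eqref{sieve} exactly.

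The point requiring care is verifying that $N_f(p_i\ell,\ell)$ genuinely equals the count of the event $A_i$ \emph{relative to the ambient $\ell$-free constraints}, i.e.\ that imposing $p_i\ell$-freeness automatically subsumes $\ell$-freeness on that coordinate; this follows from Remark~\ref{obs s_free}(2), since $\ell \mid p_i\ell$. Likewise one must confirm that the intersection of all the $A_i$ and $B_i$ really reconstitutes the full $(q-1)$-free condition rather than a weaker one, which is where Remark~\ref{obs s_free}(3),(4) are essential. I expect the main obstacle to be purely bookkeeping: correctly identifying each refining event with the corresponding $N_f$ quantity and tracking the $\ell$-free baseline so that the Bonferroni coefficient comes out as $2r-1$ and not some other value. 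Once the dictionary between the $s$-free conditions and the set-theoretic events is pinned down, the inequality is a direct application of the elementary bound above.
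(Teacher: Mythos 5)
Your proposal is correct and is essentially the paper's argument: the authors prove the same inequality by checking that each $\alpha$ giving a primitive pair contributes exactly $r+r-(2r-1)=1$ to the right-hand side while every other $\alpha$ contributes at most $0$, which is precisely the pointwise form of the Bonferroni bound you apply to the events $A_i$, $B_i$ inside the ambient set of $(\ell,\ell)$-free pairs. Your identification of the intersection with the $(q-1)$-free condition via Remark \ref{obs s_free}(2)--(4) matches the paper's use of that remark.
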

\begin{proof}
The left side of \eqref{sieve} counts every $\alpha \in \F_q^n$ for which both  $\alpha$ and $f(\alpha)$ are primitive.
Observe that if $\alpha$ and $f(\alpha)$ are primitive then they are also  $p_i \ell$-free and $\ell$-free, so they are counted $r+r - (2r-1)=1$
times on the right side of \eqref{sieve}.
For any other $\alpha \in \F_q^n$, from  Remark \ref{obs s_free}, we have that either $\alpha$ or $f(\alpha)$ is not $p_i \ell$-free for some
$i \in \{ 1,\ldots , r\}$, so the right side of \eqref{sieve} is at most zero.	
\end{proof}

%The next result give us more information about the set $\Gamma_p(m_1,m_2)$. The proof is essentially the same as \cite[Theorem 3.8]{Cohen}.

\begin{lemma}\label{lema3.5}
Let $\ell$ be a divisor of $q-1$ and let $\{p_1,...,p_r\}$ be the set of all primes which divide $q-1$ but do not divide $\ell$.
	Suppose that $\delta=1-2\sum_{i=1}^{r}\frac{1}{p_i}>0$ and let  $\Delta=\frac{2r-1}{\delta}+2$. If
	$q^{\frac{1}{2}} \geq (m_1 + m_2) W(\ell)^2  \Delta$,
	then $k\in  \Gamma_p(m_1,m_2)$.
\end{lemma}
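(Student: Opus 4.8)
The plan is to combine the sieve inequality of Lemma \ref{lemmasieve} with the character-sum machinery from the proof of Theorem \ref{main teo}. First, for arbitrary divisors $l_1,l_2$ of $q-1$ I would record, exactly as in \eqref{eq1}, the decomposition
$$N_f(l_1,l_2) = \theta(l_1)\theta(l_2)\bigl(U + E(l_1,l_2)\bigr),$$
where $U := \sharp(\mathbb{F}_q\setminus S_f)$ is the contribution of the trivial pair of characters (so $U \geq q-(m_1+m_2+1)$, independently of $l_1,l_2$) and $E(l_1,l_2)$ gathers all terms with $(d_1,d_2)\neq(1,1)$. The only input I need about $E$ is that cases (ii)--(iii) of that proof give $|\tilde\chi(\chi_1,\chi_2)|\leq (m_1+m_2)q^{\frac12}$ whenever $(\chi_1,\chi_2)$ is not the trivial pair.

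Next I would substitute this into \eqref{sieve}. Since each $p_i$ is coprime to $\ell$ we have $\theta(p_i\ell)=(1-1/p_i)\theta(\ell)$, so factoring $\theta(\ell)^2$ out of every summand turns the right-hand side of \eqref{sieve} into $\theta(\ell)^2(\delta U + \mathcal{E})$, where the coefficient of $U$ collapses to $2\sum_{i=1}^r(1-1/p_i)-(2r-1)=1-2\sum_{i=1}^r 1/p_i=\delta$, and $\mathcal{E}$ is the induced combination of the errors $E(p_i\ell,\ell)$, $E(\ell,p_i\ell)$ and $E(\ell,\ell)$.

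The heart of the argument is a sharp bound for $\mathcal{E}$. Writing the divisors of $p_i\ell$ as the divisors of $\ell$ together with $p_i$ times the divisors of $\ell$, I get $E(p_i\ell,\ell)=E(\ell,\ell)+E_i^{(1)}$, where $E_i^{(1)}$ ranges only over characters $\chi_1$ whose order is divisible by $p_i$; since $|\mu(p_id_1')|=|\mu(d_1')|$ this gives the clean bound $|E_i^{(1)}|\leq (m_1+m_2)q^{\frac12}W(\ell)^2$, and symmetrically for $E_i^{(2)}$ coming from $E(\ell,p_i\ell)=E(\ell,\ell)+E_i^{(2)}$. Substituting these makes the $E(\ell,\ell)$ pieces telescope again with coefficient $\delta$, leaving $\mathcal{E} = \delta E(\ell,\ell)+\sum_{i=1}^r(1-1/p_i)(E_i^{(1)}+E_i^{(2)})$. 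The main obstacle is to keep the constant sharp here: using $\sum_{i=1}^r(1-1/p_i)=r-\tfrac{1-\delta}{2}$ rather than the crude bound $\sum_{i=1}^r(1-1/p_i)\leq r$ is exactly what produces the factor $2r-1+\delta$ (hence $\Delta$) instead of a weaker $2r$, so I would carry the weights $1-1/p_i$ all the way through.

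Finally I would assemble
$$\delta U + \mathcal{E} = \delta\bigl(U+E(\ell,\ell)\bigr)+\sum_{i=1}^r(1-1/p_i)\bigl(E_i^{(1)}+E_i^{(2)}\bigr)\geq \delta\bigl(U+E(\ell,\ell)\bigr)-(m_1+m_2)q^{\frac12}W(\ell)^2(2r-1+\delta),$$
estimate $U+E(\ell,\ell)=N_f(\ell,\ell)/\theta(\ell)^2$ from below via \eqref{eq desigualdade N}, and divide by $\delta>0$. Routine algebra then reduces the desired positivity to $q+(m_1+m_2)q^{\frac12}-(m_1+m_2)-1\geq (m_1+m_2)q^{\frac12}W(\ell)^2\Delta$, which follows from the hypothesis $q^{\frac12}\geq(m_1+m_2)W(\ell)^2\Delta$ together with $q\geq 4$ (so that $(m_1+m_2)(q^{\frac12}-1)\geq 1$). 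This yields $N_f(q-1,q-1)\geq \theta(\ell)^2(\delta U+\mathcal{E})>0$ for every $f\in\Upsilon_q(m_1,m_2)$, that is, $k\in\Gamma_p(m_1,m_2)$.
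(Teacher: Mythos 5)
Your proposal is correct and follows essentially the same route as the paper: the decomposition $E(p_i\ell,\ell)=E(\ell,\ell)+E_i^{(1)}$ is exactly the paper's splitting of $N_f(p_i\ell,\ell)-\theta(p_i)N_f(\ell,\ell)$ via the divisors of $p_i\ell$ that are multiples of $p_i$, and your identity $\sum_{i=1}^r(1-1/p_i)=\tfrac{2r-1+\delta}{2}$ is the paper's $\sum_i\theta(p_i)=\tfrac{\delta}{2}(\Delta-1)$. The final assembly via \eqref{eq desigualdade N} matches as well.
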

\begin{proof}
Recall that $\theta(p_i) = 1 - \dfrac{1}{p_i}$, for all $i = 1, \ldots, r$,
then we may rewrite the right side of \eqref{sieve} obtaining

\begin{equation}\label{eq N}
\begin{split}
N_f(q-1,q-1)\geq &\sum_{i=1}^{r}(N_f(p_i \ell,\ell) - \theta(p_i)N_f(\ell,\ell)) \\& +\sum_{i=1}^{r}(N_f(\ell,p_i\ell) - \theta(p_i)N_f(\ell,\ell)) + \delta N_f(\ell,\ell).
\end{split}
\end{equation}
	
Since $\theta(p_i \ell) = \theta(p_i) \theta(\ell)$, for all $i = 1, \ldots, r$, from Equation (\ref{eq1}) we get
$$
N_f(p_i \ell,\ell)=\theta(p_i)\theta(\ell)^2
\sum_{d_1|p_i \ell, d_2|\ell}\frac{\mu(d_1)\mu(d_2)}{\phi(d_1)\phi(d_2)}
\sum_{\substack{\ord(\chi_1)=d_1 \\ \ord(\chi_2)=d_2}}
\tilde{\chi}(\chi_1,\chi_2),
$$
	for all $i$.
	
We split the set of $d_1$'s which divide $p_i \ell$ into two sets: the first one contains those which do not have $p_i$ as a factor, while the seconde
one contains those which are a mutiple of $p_i$. This will split the first summation into two sums, and we get
%
%
%Separating the sum
%$\displaystyle \sum_{d_1|p_i \ell, d_2|\ell}$
%in two parts, the first one given by elements $d_1 | \ell$ and the second one given by
%$p_i | d_1$ and $d_1 | p_i \ell$, we have
%$\displaystyle \sum_{d_1|p_i \ell, d_2|\ell}\frac{\mu(d_1)\mu(d_2)}{\phi(d_1)\phi(d_2)}\sum_{\chi_{d_1},\chi_{d_2}}\tilde{\chi}(\chi_{d_1},\chi_{d_2}) = \sum_{d_1| \ell, %d_2|\ell}\frac{\mu(d_1)\mu(d_2)}{\phi(d_1)\phi(d_2)}\sum_{\chi_{d_1},\chi_{d_2}}\tilde{\chi}(\chi_{d_1},\chi_{d_2}) + \sum_{p_i |d_1|p_i \ell, %d_2|\ell}\frac{\mu(d_1)\mu(d_2)}{\phi(d_1)\phi(d_2)}\sum_{\chi_{d_1},\chi_{d_2}}\tilde{\chi}(\chi_{d_1},\chi_{d_2})$,
%we have
\begin{equation*}
\begin{split}
N_f(p_i \ell,\ell)&=\theta(p_i)\theta(\ell)^2
\sum_{d_1| \ell, d_2|\ell}\frac{\mu(d_1)\mu(d_2)}{\phi(d_1)\phi(d_2)}
\sum_{\substack{\ord(\chi_1)=d_1 \\ \ord(\chi_2)=d_2}}
\tilde{\chi}(\chi_1,\chi_2) \\&
+ \theta(p_i)\theta(\ell)^2
\sum_{p_i | d_1, d_1|p_i \ell, d_2|\ell}\frac{\mu(d_1)\mu(d_2)}{\phi(d_1)\phi(d_2)}
\sum_{\substack{\ord(\chi_1)=d_1 \\ \ord(\chi_2)=d_2}}
\tilde{\chi}(\chi_1,\chi_2)
\end{split}
\end{equation*}
and from the expression for $N_f(\ell,\ell)$ (see equation \eqref{eq1}) we get

$$
N_f(p_i \ell,\ell) - \theta(p_i) N_f(\ell, \ell) = \theta(p_i) \theta(\ell)^2
\sum_{\substack{ p_i |d_1, d_1|p_i\ell \\ d_2|\ell}}
\frac{\mu(d_1)\mu(d_2)}{\phi(d_1)\phi(d_2)}
\sum_{\substack{\ord(\chi_1)=d_1 \\ \ord(\chi_2)=d_2}}
\tilde{\chi}(\chi_1,\chi_2).
$$
	
From (ii) and (iii) in the proof of Theorem \ref{main teo} and from
$$
\displaystyle \sum_{\substack{p_i|d_1, d_1| p_i l \\ d_2|l  }}|\mu(d_1)||\mu(d_2)| = W(l)^2
$$
we conclude that
$$
\left| N_f(p_i \ell,\ell) - \theta(p_i) N_f(\ell, \ell) \right| \leq
(m_1 + m_2) \theta(p_i) \theta(\ell)^2  W(\ell)^2 q^{\frac{1}{2}}.
$$
In a similar way, we can conclude that
$$
\left| N_f(\ell,p_i \ell) - \theta(p_i) N_f(\ell, \ell) \right| \leq
(m_1 + m_2) \theta(p_i) \theta(\ell)^2  W(\ell)^2 q^{\frac{1}{2}},
$$
for all $i=1,\ldots,r$.
	
Replacing those results on inequality (\ref{eq N}) we get
$$
\displaystyle N_f(q-1, q-1) \geq \delta N_f(\ell, \ell) - 2(m_1 + m_2) \theta(\ell)^2  W(\ell)^2 q^{\frac{1}{2}} \sum_{i=1}^{r} \theta(p_i).
$$

    From $\displaystyle \sum_{i=1}^{r}\theta(p_i) = \dfrac{\delta}{2} \left( \dfrac{2r-1}{\delta} + 1 \right) = \dfrac{\delta}{2} (\Delta - 1)$, where $\Delta=\frac{2r-1}{\delta}+2$, we get

    $$
	\displaystyle N_f(q-1, q-1) \geq \delta N_f(\ell, \ell) - (m_1 + m_2) \delta(\Delta - 1) \theta(\ell)^2   W(\ell)^2 q^{\frac{1}{2}}.
	$$

    Since $N_f(\ell,\ell) \geq \theta(\ell)^2
\left( q-(m_1+m_2+1)-(m_1+m_2)q^{\frac{1}{2}}(W(\ell)^2-1)\right)$ (see equation (\ref{eq desigualdade N})), we have that
 $$
	\displaystyle N_f(q-1, q-1) \geq \delta \theta(\ell)^2  \left( q - (m_1 + m_2 + 1)  + (m_1+m_2)q^{\frac{1}{2}} - (m_1 + m_2) \Delta W(\ell)^2q^{\frac{1}{2}} \right)  .
$$
	
	 From the hypothesis we have $\delta >0$, and since $(m_1 + m_2)q^{\frac{1}{2}} - (m_1 + m_2 + 1) > 0$, we conclude that if $q^{\frac{1}{2}} \geq (m_1 + m_2) W(\ell)^2  \Delta$, then $N_f(q-1, q-1) > 0$ and therefore $k\in  \Gamma_p(m_1,m_2)$.
\end{proof}

For any $p$ we have the following result.

\begin{proposition}\label{noGamma}
If $\phi(q-1) \leq m_1 + m_2+1$, then $k \notin \Gamma_p(m_1,m_2)$.
\end{proposition}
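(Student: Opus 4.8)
The plan is to exploit the observation, recorded just before Theorem \ref{main teo}, that $k \in \Gamma_p(m_1,m_2)$ forces $N_f(q-1,q-1) > 0$ for every $f \in \Upsilon_q(m_1,m_2)$. Taking the contrapositive, to prove $k \notin \Gamma_p(m_1,m_2)$ it is enough to exhibit a single $f \in \Upsilon_q(m_1,m_2)$ with $N_f(q-1,q-1) = 0$, i.e.\ one for which $(\alpha, f(\alpha))$ is not a primitive pair for any $\alpha \in \mathbb{F}_q$. A non-primitive $\alpha$ already spoils the pair, so the real goal is to construct an $f$ sending every primitive element of $\mathbb{F}_q$ to a non-primitive value, counting $0$ and $\infty$ as non-primitive. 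I would write $t = \phi(q-1)$ for the number of primitive elements and list them as $\gamma_1, \dots, \gamma_t$, all nonzero; their nonvanishing is exactly what guarantees the condition $g \neq x$ when I later check $\Lambda_q \neq \emptyset$.

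First I would dispose of the case $t \le m_1+m_2$. Here I set $a = \min(t,m_1)$ and take $f_1 = \prod_{i=1}^{a}(x-\gamma_i)$ and $f_2 = \prod_{i=a+1}^{t}(x-\gamma_i)$, so that the $t$ primitive elements become the zeros and poles of $f = f_1/f_2$. Then $\deg f_1 \le m_1$, $\deg f_2 = t-a \le m_2$, $\gcd(f_1,f_2)=1$ because the roots are distinct, and $\Lambda_q(f_1,f_2) \neq \emptyset$ since the pair $(1, x-\gamma_1)$ belongs to it; as every $\gamma_i$ is a zero or a pole, $f(\gamma_i) \in \{0, \infty\}$ is non-primitive for all $i$.

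The case $t = m_1+m_2+1$ is where the sharp ``$+1$'' appears, and it is the step I expect to be the main obstacle. Now zeros and poles can absorb only $m_1+m_2$ of the primitive elements, leaving one distinguished element $\gamma_0$; I would take $f_1 = c\prod_{i=1}^{m_1}(x-\gamma_i)$ and $f_2 = \prod_{i=m_1+1}^{m_1+m_2}(x-\gamma_i)$ and tune the scalar $c \in \mathbb{F}_q^*$ so that $f(\gamma_0)$ equals a fixed non-primitive element $\gamma$. Since $\gamma_0$ is distinct from all chosen roots, $f(\gamma_0) = cA$ with $A = \prod_{i=1}^{m_1}(\gamma_0-\gamma_i) \cdot \prod_{i=m_1+1}^{m_1+m_2}(\gamma_0-\gamma_i)^{-1}$ nonzero, so $c = \gamma A^{-1}$ works, and multiplying $f_1$ by a scalar disturbs neither the degrees, nor coprimality, nor $\Lambda_q \neq \emptyset$. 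The point needing care is the availability of a non-primitive $\gamma$: in this case $t = m_1+m_2+1 \ge 3$, whence $q-1 \ge \phi(q-1) = t \ge 3$ and therefore $\phi(q-1) < q-1$, so $\mathbb{F}_q^*$ does contain non-primitive elements. With $f(\gamma_0)=\gamma$ non-primitive and the other $\gamma_i$ sent to $0$ or $\infty$, every primitive element again has a non-primitive image.

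In both cases the constructed $f$ belongs to $\Upsilon_q(m_1,m_2)$ and no $\alpha$ makes both $\alpha$ and $f(\alpha)$ primitive, so $N_f(q-1,q-1)=0$ and hence $k \notin \Gamma_p(m_1,m_2)$. The only genuinely non-routine ingredient is the scaling trick for $t=m_1+m_2+1$ together with the accompanying guarantee that a non-primitive target value is available; this is precisely what makes the threshold $m_1+m_2+1$, rather than $m_1+m_2$, the correct one.
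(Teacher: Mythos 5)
Your proof is correct and follows essentially the same route as the paper: absorb all but (at most) one primitive element as zeros or poles of $f=f_1/f_2$, then rescale so that the remaining primitive element maps to a non-primitive value (the paper multiplies by $\beta = 1/f(\alpha_{\phi(q-1)})$ to land on $1$, you scale $f_1$ to land on an arbitrary non-primitive $\gamma$ -- the same trick). Your explicit case split and the verification that $\Lambda_q(f_1,f_2)\neq\emptyset$ via the pair $(1,x-\gamma_1)$ are slightly more careful than the paper's terse wording, but the underlying argument is identical.
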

\begin{proof}
Let $ \{ \alpha_1, \ldots , \alpha_{\phi(q-1)}\}$ the set of all primitive elements of $\mathbb{F}_{q}$. Note that, if $\phi(q-1) \leq m_1 + m_2+1$, we may choose polynomials $f_1(x)$ and $f_2(x)$
of degrees $m_1$ and $m_2$, respectively, such that $f_1(\alpha_j) f_2(\alpha_j)=0$, for all $j=1,\ldots,\phi(q-1) - 1$, and $f_1(\alpha_{\phi(q-1)}),f_2(\alpha_{\phi(q-1)}) \neq 0$, and $f(x) = \frac{f_1(x)}{f_2(x)} \in \Upsilon_{q} (m_1,m_2)$. So, $(\alpha_j, f(\alpha_j))$ is not a primitive pair for all $j=1,\ldots,\phi(q-1)-1$. Taking $\beta = \frac{1}{f(\alpha_{\phi(q-1)})}$ we have that $h(x)=\beta f(x) \in \Upsilon_{q} (m_1,m_2)$ and $(\alpha_{\phi(q-1)}, h(\alpha_{\phi(q-1)})) = (\alpha_{\phi(q-1)},1)$ is not  a primitive pair either. Thus $k \notin \Gamma_p(m_1,m_2)$.
\end{proof}

When $p=2$ we can use the following results to gather  information
on the set $\Gamma_2(m_1,m_2)$. The proposition below is a generalization of
\cite[Theorem 3.7]{mersenne}.

\begin{proposition}\label{prop2}
Let $m= max \{ m_1 , m_2\}$ and $k > 1$. If $2^k-1$ is a prime number and
$2^k-2 > m_1
+ m_2 + m $ then $k\in \Gamma_2(m_1,m_2)$.
\end{proposition}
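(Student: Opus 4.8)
The plan is to use the hypothesis that $q-1=2^k-1$ is prime to trivialize the notion of primitivity, reducing the whole statement to a counting argument. Since $\mathbb{F}_q^{*}$ is cyclic of prime order $q-1$, every one of its elements different from $1$ is a generator; because $k>1$ gives $q-1\geq 3$, this means that $\beta\in\mathbb{F}_q$ is primitive if and only if $\beta\notin\{0,1\}$. Recalling that to obtain $k\in\Gamma_2(m_1,m_2)$ it is enough to exhibit, for each $f=\frac{f_1}{f_2}\in\Upsilon_q(m_1,m_2)$, some $\alpha\in\mathbb{F}_q$ making $(\alpha,f(\alpha))$ a primitive pair, I would fix an arbitrary such $f$ and look for $\alpha$ with $\alpha\notin\{0,1\}$ and $f(\alpha)\notin\{0,1\}$ (with $f(\alpha)$ defined).

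I would then translate each forbidden outcome into a polynomial root condition: $f(\alpha)$ is undefined exactly when $f_2(\alpha)=0$, one has $f(\alpha)=0$ exactly when $f_1(\alpha)=0$, and, since $\mathrm{char}\,\mathbb{F}_q=2$, one has $f(\alpha)=1$ exactly when $(f_1+f_2)(\alpha)=0$. Hence the set of $\alpha$ that fail to produce a primitive pair is contained in $\{0,1\}$ together with the $\mathbb{F}_q$-roots of $f_1$, of $f_2$, and of $f_1+f_2$.

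The only step that needs real justification, and which I expect to be the main obstacle, is that $f_1+f_2$ is not the zero polynomial, since otherwise the condition $f(\alpha)=1$ would be met by every $\alpha$ and the count would collapse. For this I would use that $f\in\Upsilon_q(m_1,m_2)$: in characteristic two $f_1+f_2=0$ forces $f_1=f_2$, and the coprimality $\gcd(f_1,f_2)=1$ then forces $f_1=f_2$ to be a nonzero constant, so $f_1f_2$ has no monic irreducible factor and $\Lambda_q(f_1,f_2)=\emptyset$, contradicting $f\in\Upsilon_q(m_1,m_2)$. Thus $f_1+f_2$ is a nonzero polynomial of degree at most $m=\max\{m_1,m_2\}$, and the number of excluded elements is at most $2+\deg f_1+\deg f_2+\deg(f_1+f_2)\leq 2+m_1+m_2+m$. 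As $\mathbb{F}_q$ has $2^k$ elements and the hypothesis $2^k-2>m_1+m_2+m$ gives $2^k>2+m_1+m_2+m$, at least one $\alpha$ survives, yielding a primitive pair $(\alpha,f(\alpha))$. Since $f$ was arbitrary, $k\in\Gamma_2(m_1,m_2)$. Everything beyond the nonvanishing of $f_1+f_2$ is elementary pigeonhole over the $2^k$ field elements, and that nonvanishing is precisely where the defining condition $\Lambda_q\neq\emptyset$ of $\Upsilon_q$ enters.
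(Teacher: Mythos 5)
Your proof is correct and follows essentially the same route as the paper's: since $2^k-1$ is prime, primitivity of $\beta$ reduces to $\beta\notin\{0,1\}$, and one counts the roots of $f_1$, $f_2$ and $f_1+f_2$ (the paper writes $f_1-f_2$, the same polynomial in characteristic two) to find a surviving $\alpha$. Your explicit check that $f_1+f_2$ is not the zero polynomial, via $\gcd(f_1,f_2)=1$ and $\Lambda_q(f_1,f_2)\neq\emptyset$, is a small detail the paper leaves implicit but does not change the argument.
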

\begin{proof}
	Assume that  $2^k-1$ is a prime, then we have a total of $2^k - 2$
	primitive elements in $\mathbb{F}_{2^k}$, since  every $ \alpha \in
	\mathbb{F}_{2^k}^* \setminus \{1\}$ is primitive.
	Let $f(x)=\frac{f_1(x)}{f_2(x)}\in
	\Upsilon_{2^k}(m_1,m_2)$ and assume that $2^k-2 > m_1
	+ m_2 + m $.  Since $f_1(x)$ and $f_2(x)$ have at most $m_1$
	and $m_2$ roots,
	respectively, and $f_1(x)-f_2(x)$ has at most $m$ roots, with $m= max \{
	m_1 , m_2\}$ then there exists a primitive element $\alpha \in
		\mathbb{F}_{2^k}$ such that $f_1(\alpha)\neq0$, $f_2(\alpha)\neq0$
			and $f_1(\alpha) \neq f_2(\alpha)$, so that $f(\alpha)$ is
			primitive.
\end{proof}

\begin{proposition}\label{prop3}
	Let $q=2^k$ and
	$m= max \{ m_1 , m_2\}$. If
	$$
	\phi(q-1) + \frac{1}{m}\phi(q-1)>q,
	$$
	then $k\in \Gamma_2(m_1,m_2)$.
\end{proposition}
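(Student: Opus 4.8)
The plan is to fix an arbitrary $f=f_1/f_2\in\Upsilon_{2^k}(m_1,m_2)$ and, following the elementary counting in the proof of Proposition~\ref{prop2}, to exhibit a primitive $\alpha$ with $f(\alpha)$ primitive; this shows $N_f(q-1,q-1)>0$ for every such $f$ and hence $k\in\Gamma_2(m_1,m_2)$. Write $q=2^k$, let $P$ denote the set of the $\phi(q-1)$ primitive elements and $N=\mathbb{F}_q\setminus P$ the set of the $q-\phi(q-1)$ non-primitive elements (so $0\in N$). Call $\alpha\in P$ \emph{bad} when $f(\alpha)$ is not primitive, i.e.\ when $\alpha$ is a pole of $f$ or $f(\alpha)\in N$; I need the number of bad $\alpha$ to be strictly smaller than $\phi(q-1)$.

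First I would record that the hypothesis is equivalent to $\phi(q-1)>m\,(q-\phi(q-1))$, that is, the number of primitive elements exceeds $m$ times the number of non-primitive ones, which is exactly the bound one expects if each non-primitive value is attained by $f$ at most $m$ times. To make this precise, for a fixed $\beta\in N$ with $\beta\neq 0$ the condition $f(\alpha)=\beta$ reads $f_1(\alpha)-\beta f_2(\alpha)=0$; this polynomial is nonzero (if $f_1=\beta f_2$ then, since $\gcd(f_1,f_2)=1$, both $f_1,f_2$ would be constant, contradicting $\Lambda_q(f_1,f_2)\neq\emptyset$) and has degree at most $m=\max\{m_1,m_2\}$, hence at most $m$ roots. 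The remaining bad targets are $\beta=0$, contributing the at most $m_1$ roots of $f_1$, and the poles $f(\alpha)=\infty$, contributing the at most $m_2$ roots of $f_2$. Summing these contributions and comparing with $\phi(q-1)$ through the hypothesis is the engine of the argument.

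The main obstacle is the bookkeeping of these three kinds of bad elements at the boundary of the inequality. The clean estimate ``at most $m$ preimages per non-primitive value'' accounts for the term $m\,(q-\phi(q-1))$, but the value $0$ and the poles are not generic: summing naively yields at most $m\,(q-\phi(q-1))+(m_1+m_2-m)=m\,(q-\phi(q-1))+\min\{m_1,m_2\}$ bad elements, which overshoots the stated bound by $\min\{m_1,m_2\}$, so absorbing this surplus is the crux. I would first invoke $\Gamma_2(m_1,m_2)=\Gamma_2(m_2,m_1)$ (Proposition~\ref{consequencias}(2)) to assume $m_1\ge m_2$, and then try to fold the fibres over $0$ and $\infty$ into the generic count by viewing $f$ as a degree-$m$ map $\mathbb{P}^1\to\mathbb{P}^1$, so that $0$ and $\infty$ are treated on the same footing as the other non-primitive targets rather than being added on separately. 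I expect the delicate cases to be precisely those where $\phi(q-1)$ only barely exceeds $m\,(q-\phi(q-1))$, and it is there that the count must be carried out with the greatest care.
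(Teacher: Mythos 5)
Your own accounting shows exactly where the argument breaks, and the break is not repaired. The naive count of bad primitive elements is at most $m\bigl(q-\phi(q-1)\bigr)+\min\{m_1,m_2\}$, while the hypothesis, being a strict inequality between integers, only guarantees $\phi(q-1)\geq m\bigl(q-\phi(q-1)\bigr)+1$; since $m_1,m_2$ are positive we have $\min\{m_1,m_2\}\geq 1$, so the surplus is never absorbed and the conclusion does not follow. The fix you sketch does not close the gap: viewing $f$ as a degree-$m$ map $\mathbb{P}^1\to\mathbb{P}^1$ turns $\infty$ into one more forbidden target whose fibre may contribute up to $m$ affine points, so the bound becomes $m\bigl(q-\phi(q-1)+1\bigr)=m\bigl(q-\phi(q-1)\bigr)+m$, which is worse than what you started with. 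The reduction to $m_1\geq m_2$ via Proposition \ref{consequencias}(2) does not help either, since then $\min\{m_1,m_2\}=m_2\geq 1$ all the same.

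The missing idea is to apply the pigeonhole on the target side rather than on the source side. Set $A_f=\{\alpha\in\F_q^*\mid \alpha\ \text{primitive},\ f_2(\alpha)\neq 0\}$ and $B_f=f(A_f)$. Discarding the at most $m_2\leq m$ poles gives $|A_f|\geq\phi(q-1)-m$, and every fibre of $f|_{A_f}$ over a point $\beta$ has at most $m$ elements, being contained in the root set of $f_1-\beta f_2$ (of $f_1$ when $\beta=0$, which now requires no special treatment). Hence $|B_f|\geq\bigl(\phi(q-1)-m\bigr)/m$, and the hypothesis is precisely $\frac{\phi(q-1)-m}{m}+\phi(q-1)>q-1$, which forces $B_f$ to meet the set of primitive elements; any $\alpha\in A_f$ with $f(\alpha)$ in that intersection gives the desired primitive pair. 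The point of this rearrangement is that the poles cost $m$ \emph{before} the division by $m$, i.e.\ only $1$ in the lower bound for $|B_f|$, so the additive surplus $\min\{m_1,m_2\}$ that defeats your source-side count never appears.
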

\begin{proof}
	Let $f(x)=\frac{f_1(x)}{f_2(x)}\in \Upsilon_{q}(m_1,m_2)$ and define
	$$
	A_f = \{\alpha \in \F_q^* \mid \alpha \text{ is primitive and } f_2(\alpha)
	\neq 0\}.
	$$
	The rational function $f$ defines a function from $A_f$ to $\F_q$ given by $\alpha \longmapsto f(\alpha)$.
	Let $B_f = \mathop{\rm{Im}} f$. For a given $\beta \in B_f$ there are at
	most $m$
	elements $\alpha \in A_f$
	such that $f(\alpha)=\beta$, since $\alpha$ must be a zero of the
	polynomial $f_1(x)-\beta f_2(x)$.
	Clearly we have  $|A_f| \geq \phi(q-1)-m$ which implies $|B_f| \geq
	\frac{\phi(q-1)-m}{m}$. Thus, if
	$\frac{\phi(q-1)-m}{m}+\phi(q-1)>q-1$ then at least one element of $B_f$,
	say $f(\alpha)$, is
	primitive
	for $\alpha \in A_f$, so that $(\alpha,f(\alpha))$ is a primitive pair and
	$k\in \Gamma_p(m_1,m_2)$.
\end{proof}

The above result cannot be extended to the case where
$p$ is an odd prime since in this case we have that $q - 1$ is even and
$\phi(q-1) + \frac{1}{m}\phi(q-1)<q$ for any $m \geq 1$.

%
%A partir do Corollary \ref{mainresult} podemos deduzir resultados assimpt\'oticos.
%
%Note that knowing $W(q-1)$ gets hard while $q$ grows, so a bound for this
%value is very useful. In \cite[Lemma 6.2]{Cohen}, the authors prove that if
%$m$
%is an
%odd positive integer then $W(m)< 6.46 m^{\frac{1}{5}}$.
%In what follows we  use a similar reasoning to  present a variavel bound for
%$W(q-1)$.

\begin{proposition}\label{cota-t}
Let $p$ be a prime, $q=p^k$,
$m_1,m_2$ positive integers,  $t >4$ a positive real number and
$$
A_t=\prod_{\substack{s \textrm{ prime }\\s < 2^t } }\frac{2}{\sqrt[t]{s}}.
$$
If
\begin{equation*} %\label{cotaAt}
q \geq
\left(
(m_1+m_2) \cdot A_t^2
\right)^{\frac{2t}{t-4}},
\end{equation*}
then $k \in \Gamma_p(m_1,m_2)$.
\end{proposition}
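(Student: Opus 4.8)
The plan is to reduce the statement entirely to Corollary~\ref{mainresult}: that corollary already yields $k \in \Gamma_p(m_1,m_2)$ as soon as $q \geq 4$ and $q^{1/2} \geq (m_1+m_2)W(q-1)^2$. So the whole task is to show that the displayed lower bound on $q$ forces this inequality, and the genuine content is a single elementary estimate for $W(\ell)=2^{\omega(\ell)}$, namely
\[
W(\ell) \leq A_t\,\ell^{1/t} \qquad \text{for every } \ell \in \mathbb{N}.
\]

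First I would prove this estimate. Factoring $\ell = \prod_j s_j^{a_j}$ into distinct primes with $a_j \geq 1$, one has
\[
\frac{W(\ell)}{\ell^{1/t}} = \prod_j \frac{2}{s_j^{a_j/t}} \leq \prod_j \frac{2}{s_j^{1/t}},
\]
where the product runs over the primes dividing $\ell$. A factor $2/s^{1/t}$ exceeds $1$ exactly when $s < 2^t$; hence deleting the prime divisors $s_j \geq 2^t$ only increases the product, and then enlarging the index set to all primes below $2^t$ (each such factor being $>1$) increases it again, giving
\[
\frac{W(\ell)}{\ell^{1/t}} \leq \prod_{\substack{s \text{ prime}\\ s<2^t}} \frac{2}{s^{1/t}} = A_t.
\]

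Next I would specialise to $\ell=q-1$, so that $W(q-1) < A_t\,q^{1/t}$ and $W(q-1)^2 < A_t^2 q^{2/t}$. To invoke Corollary~\ref{mainresult} it then suffices to have $q^{1/2} \geq (m_1+m_2)A_t^2 q^{2/t}$; since $\tfrac12-\tfrac2t=\tfrac{t-4}{2t}>0$ for $t>4$, this is equivalent to $q^{(t-4)/(2t)} \geq (m_1+m_2)A_t^2$, i.e.\ precisely the hypothesis $q \geq \big((m_1+m_2)A_t^2\big)^{2t/(t-4)}$. The remaining side condition $q\geq 4$ comes for free: the prime $2<2^t$ contributes the factor $2^{(t-1)/t}>1$, so $A_t>1$, whence $(m_1+m_2)A_t^2>2$, while the exponent $\tfrac{2t}{t-4}$ is greater than $2$, forcing $q>4$.

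I do not anticipate a serious obstacle; the only step requiring care is the truncation in the bound for $W(\ell)$, where one must correctly justify that discarding the large prime divisors and then padding the product with all small primes each move the inequality in the desired direction.
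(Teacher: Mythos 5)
Your proposal is correct and follows essentially the same route as the paper: reduce to Corollary~\ref{mainresult} via the chain of equivalences rewriting the hypothesis as $q^{1/2}\geq (m_1+m_2)(A_t q^{1/t})^2$, and establish $W(q-1)<A_t q^{1/t}$ by dropping the exponents $a_j$, discarding prime divisors $\geq 2^t$, and padding with all primes below $2^t$. Your explicit verification that the hypothesis forces $q>4$ (via $A_t>1$ and $m_1+m_2\geq 2$) is a small point the paper's proof leaves implicit, but otherwise the two arguments coincide.
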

\begin{proof}
We note initially that
\[
\begin{split}
&q \geq
\left(
(m_1+m_2) \cdot A_t^2
\right)^{\frac{2t}{t-4}} \iff
q^{\frac{t-4}{2t}} \geq
\left(
(m_1+m_2) \cdot A_t^2
\right) \\ &\iff
q^{\frac{1}{2}} \geq
\left(
(m_1+m_2) \cdot A_t^2 \right) q^{\frac{2}{t}}
\iff
q^{\frac{1}{2}} \geq
(m_1+m_2) \cdot (A_t \cdot q^{\frac{1}{t}})^2.
\end{split}
\]

Let $q-1=p_1^{\alpha_1}\cdots p_l^{\alpha_l}$, where $p_1, \ldots, p_l$ are
positive distinct primes, then $W(q-1)=2^{l}$. If $p > 2^t$ then
$\frac{2}{\sqrt[t]{p}}<1$, and we get
$$\frac{W(q-1)}{q^{\frac{1}{t}}}
<\frac{W(q-1)}{(q-1)^{\frac{1}{t}}}
=\frac{2^{l}}{\sqrt[t]{p_1^{\alpha_1}}\cdots\sqrt[t]{p_l^{\alpha_l}}}
\leq\prod_{i=1}^{l} \frac{2}{\sqrt[t]{p_i}}
\leq
\prod_{\substack{2 \leq p_i \leq 2^t \\ 1 \leq i \leq l}}\frac{2}{\sqrt[t]{p_i}}\leq A_t.$$
This implies that
$A_t \cdot q^{\frac{1}{t}} > W(q-1)$, hence
$q^{\frac{1}{2}}
> (m_1+m_2) W(q-1)^2$.
From Corollary \ref{mainresult} we get $k \in \Gamma_p(m_1,m_2)$.
\end{proof}

\begin{remark}
To illustrate the use of the above result, we take, for example, $t=6$ in that Proposition and we get, for any prime $p$, that:
\begin{enumerate}
\item If $q=p^k \geq 5.6 \times 10^{21}$ then $k \in \Gamma_p(2,1)$.
\item If $q=p^k \geq 3.2 \times 10^{22}$ then $k \in \Gamma_p(2,2)$.
\item If $q=p^k \geq 1.2 \times 10^{23}$ then $k \in \Gamma_p(3,2)$.
\end{enumerate}
\end{remark}

\section{Working Examples} \label{examples}
In \cite{mersenne} the authors define a set of matrices $\mathcal{M}_q$ and for each
$A \in \mathcal{M}_q$ they associate a rational function $\lambda_A(x) \in 
\F_q(x)$ which is a quotient of a polynomial of degree at most 2 by another of 
degree at most one. In that paper they work over a finite field with $2^k$ 
elements, and they want to investigate the existence of primitive elements 
$\alpha$ such that $\lambda_A(\alpha)$ is also primitive. 
It is easy to check that if $f \in \Upsilon_{q} (2,1)$ then there exists
$A \in \mathcal{M}_q$ such that $f=\lambda_A$. Conversely, if $A \in \mathcal{M}_q$ then
$\lambda_A \in \Upsilon_{q} (2,1)$ unless $\lambda_A(x)=x , x^2$ or $\beta x^{-1}$ for some
$\beta \in \F_q$. Observe that for $\lambda_A(x)=x , x^2$ or $\beta x^{-1}$ there always exists an
$\alpha \in \F_q$ such that $(\alpha, \lambda_A(\alpha))$ is a primitive pair. 
They also define a set $\mathcal{B}$ of powers of $2$ which satisfy a similar 
condition 
as the elements of
$\Gamma_2(2,1)$, and we get that  
$q=2^k \in \mathcal{B}$ if and only if $k \in \Gamma_2(2,1)$, and from 
\cite[Theorem 1.1]{mersenne}, we get
$\Gamma_2(2,1) = \mathbb{N} \setminus \{1, 2,  4\}$.

In \cite{SAG} the authors define a set of matrices $N_{2 \times 3} (\F_q)$ in a 
similar way as
it was done in \cite{mersenne} with the difference
that in this case the associated rational function $\lambda_A(x)$ is quotient
of polynomials of degree two. They also want to study 
the existence of primitive elements 
$\alpha$ such that $\lambda_A(\alpha)$ is also primitive. 
It is easy to
see that if $f \in \Upsilon_{q} (2,2)$ and $f \notin \Upsilon_{q} (2,1) \cup \Upsilon_{q} (1,2)$, then
there exists a matrix $A \in N_{2 \times 3} (\F_q)$ such that $f=\lambda_A(x)$.
Observe also that $\Upsilon_{q} (2,1) \cup \Upsilon_{q} (1,2) \subset 
\Upsilon_{q} (2,2)$,
so $\Gamma_2(2,2) \subset \mathbb{N} \setminus \{1, 2,  4\}$. Conversely, if
$A \in N_{2 \times 3} (\F_q)$ then $\lambda_A \in \Upsilon_{q} (2,2)$, and from
\cite[Theorem 1.5]{SAG} we get $\mathbb{N} \setminus \{1, 2,4,6,8,9,10,12\} 
\subset \Gamma_2(2,2)$. Also in
\cite{SAG} the authors conclude that $k\in\Gamma_2(2,2)$ for all positive 
integer $k$, with exception of $k\in\{1, 2, 4, 6, 8, 9, 10, 12\}$, a result 
that may  be  recovered from  
Proposition \ref{consequencias}(3). They also prove that $1,2,4 
\notin \Gamma_2(2,2)$
and conjecture that $6, 8, 9, 10, 12 \in \Gamma_2(2,2)$, from Proposition 
\ref{prop2}
we get $9 \in \Gamma_2(2,2)$.

Now, we will study the set $\Gamma_p(3,2)$ for $p=2,3,5,7$, and we start with the case where $p = 2$.
%and we note that from Proposition \ref{noGamma} it follows that $\{1,2,3\} \cap \Gamma_2(3,2) = \emptyset$, and
%by Proposition \ref{prop2} we have, for example, that $\{5, 7, 13, 17, 19\} \subset \Gamma_2(3,2)$

\begin{proposition}\label{proposition1}
$\mathbb{N} \setminus \{1, 2, 3, 4, 6, 8, 10, 12\}  \subset \Gamma_2(3,2)$
and $\{1,2,3,4\} \cap \Gamma_2(3,2) = \emptyset$.
\end{proposition}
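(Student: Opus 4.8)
The statement splits into two parts, and I would treat them separately. The plan is to establish the negative part, $\{1,2,3,4\} \cap \Gamma_2(3,2) = \emptyset$, using the ``non-existence'' machinery already in place, and the positive part, $\mathbb{N} \setminus \{1,2,3,4,6,8,10,12\} \subset \Gamma_2(3,2)$, using the sieve and character-sum estimates from Section \ref{main section}.

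For the negative part, I would invoke Proposition \ref{noGamma} with $m_1 = 3$, $m_2 = 2$, so that $m_1 + m_2 + 1 = 6$: I need $\phi(2^k - 1) \leq 6$. Checking directly, $k=1$ gives $\phi(1)=1$, $k=2$ gives $\phi(3)=2$, $k=3$ gives $\phi(7)=6$, and $k=4$ gives $\phi(15)=8$. So Proposition \ref{noGamma} immediately handles $k \in \{1,2,3\}$. For $k=4$ the bound fails ($\phi(15)=8 > 6$), so I would handle $k=4$ by a separate, explicit argument: construct a concrete $f \in \Upsilon_{16}(3,2)$ for which no primitive $\alpha$ yields a primitive $f(\alpha)$. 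The cleanest route is to note that $\mathbb{F}_{16}$ has $\phi(15)=8$ primitive elements, and use the extra freedom in degrees $(3,2)$ over degrees $(2,2)$ (recall $4 \notin \Gamma_2(2,2)$ from the discussion preceding this proposition, hence $4 \notin \Gamma_2(3,2)$ already by Proposition \ref{consequencias}(3)). In fact this last observation alone disposes of $k=1,2,4$, since $\{1,2,4\} \cap \Gamma_2(2,2) = \emptyset$ and $\Gamma_2(3,2) \subseteq \Gamma_2(2,2)$; only $k=3$ then genuinely needs Proposition \ref{noGamma}, which applies cleanly.

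For the positive part, the strategy is the usual asymptotic-plus-finite-check split. First I would apply Corollary \ref{mainresult} with $m_1 + m_2 = 5$: whenever $q^{1/2} \geq 5\, W(q-1)^2$ with $q = 2^k$, we get $k \in \Gamma_2(3,2)$. Using a crude bound on $W(q-1) = 2^{\omega(2^k-1)}$ (for instance via the prime-counting estimate underlying Proposition \ref{cota-t}, or directly bounding $\omega(2^k - 1)$), this inequality holds for all sufficiently large $k$, leaving only finitely many $k$ to examine. For each remaining $k$ below that threshold, I would first try the crude Corollary \ref{mainresult}; when it fails I would sharpen it with the sieve of Lemma \ref{lema3.5}, choosing $\ell$ to be the product of the small prime divisors of $q-1$ and letting $\{p_1,\dots,p_r\}$ be the remaining (large) primes, so that $\delta = 1 - 2\sum 1/p_i > 0$ and $\Delta = (2r-1)/\delta + 2$ is small; the test $q^{1/2} \geq 5\, W(\ell)^2 \Delta$ is then much weaker. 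For the Mersenne-prime values of $k$ (where $2^k-1$ is prime) I would instead invoke Proposition \ref{prop2}, checking $2^k - 2 > m_1 + m_2 + m = 5 + 3 = 8$, and for any stubborn small cases Proposition \ref{prop3}.

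The main obstacle is the finite range of ``hard'' values of $k$ between the threshold where Corollary \ref{mainresult} starts working and the excluded set $\{1,2,3,4,6,8,10,12\}$: for these $k$ the factorization of $2^k - 1$ matters, and I expect to have to tabulate $\omega(2^k-1)$, $W(\ell)$, $r$, $\delta$, and $\Delta$ for each and verify the Lemma \ref{lema3.5} inequality case by case. The delicate point is that the excluded values $6,8,10,12$ must be shown to be \emph{genuine} failures of the sieve bound (not covered by any of the criteria), while the included values just above and among them (e.g.\ $5,7,9,11,13$ and all $k \geq 13$) must each pass at least one criterion; getting the sieve parameter $\ell$ chosen optimally for the borderline cases is where the real work lies.
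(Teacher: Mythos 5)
Your plan is essentially the paper's own proof: an asymptotic threshold from Proposition \ref{cota-t}, the test $2^{k/2} > 5\,W(2^k-1)^2$ from Corollary \ref{mainresult} over the remaining finite range, Lemma \ref{lema3.5} with a small-prime choice of $\ell$ for the borderline cases $k\in\{11,14,15,16,18,20,24,28,36\}$, Propositions \ref{prop2} and \ref{prop3} for $k=5,7,9$, and Proposition \ref{noGamma} for the negative part (the paper disposes of $k=4$ by citing an explicit counterexample in $\F_{16}$ from \cite{mersenne}, while your route via $4\notin\Gamma_2(2,2)$ and Proposition \ref{consequencias}(3) is equally valid). One small correction: the proposition makes no claim that $6,8,10,12\notin\Gamma_2(3,2)$, so you need not (and could not, merely from the failure of sufficient criteria) show these are genuine non-members --- they are simply omitted from the positive assertion.
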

\begin{proof}
From Proposition \ref{cota-t} and using $t=6$ we get that
$k \in \Gamma_2(3,2)$ for all $k \geq 77$.
From Corollary \ref{mainresult}, for $k \geq 4$, if $2^{\frac{k}{2}} > 5 W(2^k-1)^2$ then
$k\in  \Gamma_2(3,2)$.
For $k < 77$, we used SAGEMATH (see \cite{SAGE}) to factor $2^k-1$ and conclude that $2^{\frac{k}{2}}>5W(2^k-1)^2$ for $k=13,17,19$ and for $k\geq 21$
with the exception of
$k=24,28,36$.
	
Using Lemma \ref{lema3.5}, with $q = 2^k$,  we get that for
$$
k \in \{11,14,15,16,18,20,24,28,36\}
$$
we have $k \in \Gamma_2(3,2)$.
Table \ref{dadosp2_3_2} subsumes these results.
\begin{table}[h]
		\centering
		\begin{tabular}{ccc|cccc}
			$k$ & $\ell$ & $\{p_1,p_2,\ldots ,p_r\}$ &$\quad$ &
			$k$ & $\ell$ & $\{p_1,p_2,\ldots ,p_r\}$ \\
			\hline
			$11$ & $1$ & $\{ 23,89\}$  &&
                      $20$ & $3 \cdot 5^2$ & $\{ 11,31,41\}$ \\
			$14$ & $3$ & $\{ 43,127\}$  & &
			          $24$ & $3^2 \cdot 5$  & $\{ 7,13,17,241\}$ \\
			$15$ & $1$ & $\{ 7,31,151\}$  &&
       			      $28$ & $3 \cdot 5$  & $\{ 29,43,113,127\}$  \\
			$16$ & $3$ & $\{ 5,17,257\}$  &&
					  $36$ & $3^3 \cdot 5$  & $\{ 7,13,19,37,73,109\}$ \\
			$18$ & $3^3$ & $\{ 7,19,73\}$  & &  & &
		\end{tabular}
\caption{Data that satisfies Lemma \ref{lema3.5} (hence $k \in \Gamma_2(3,2)$).}
\label{dadosp2_3_2}
\end{table}

From Proposition \ref{noGamma} we get that $1,2,3 \notin \Gamma_2(3,2)$.
In \cite{mersenne} it is shown that
for any primitive element $\beta \in \F_{16}$ we have that $f(\beta)$ is not primitive, where
$f=\frac{\alpha x+1}{x+\alpha} \in \F_{16}[x]$ and $\alpha \in \F_{16}$ is
a fixed primitive element of $\F_{16}$,
so
$4 \notin \Gamma_2(1,1)$ and in particular $4 \notin \Gamma_2(3,2)$.

Finally, from
Proposition \ref{prop2} we get that $5, 7 \in \Gamma_2(3,2)$ and from Proposition
\ref{prop3} we get also $9 \in \Gamma_2(3,2)$.
\end{proof}

%\begin{proof}
%	By Theorem \ref{main teo}, for $k \geq 4$, if $2^{\frac{k}{2}}>6W(2^k-1)^2$ then $N_f(2^k-1,2^k-1)>0$.
%	From
%	\cite[Lemma 6.2]{Cohen} we get  that if $m$ is an
%	odd positive integer then $W(m)< 6.46 m^{\frac{1}{5}}$, thus
%	 $$W(2^k-1)<6.46(2^k-1)^{\frac{1}{5}}<6.46(2^k)^\frac{1}{5}.$$ So, if $2^\frac{k}{2}>6(6.46)^22^{\frac{2k}{5}}$, then $q=2^k\in\mathcal{M}$. Now, $2^\frac{k}{2}>6(6.46)^22^{\frac{2k}{5}}$ is equivalent to $2^{\frac{k}{10}} > 250.3896$. Therefore we can conclude that $k \in \Gamma_2(3,3)$ for all $k \geq 80$.
%
%	For $k < 80$, we used SAGEMATH (see \cite{SAGE}) to factor $2^k-1$ and conclude that $2^{\frac{k}{2}}>6W(2^k-1)^2$ for $k\geq21$, with the exception of $k=24, 28, 36$.
%
%
%Using Lemma \ref{lema3.5} and SAGEMATH (\cite{SAGE}) to calculate $W(\ell)^2$ and $\Delta$ we get
%$$\{11, 14, 15, 16, 18, 20, 24, 28, 36\} \subset \Gamma_2(3,3).$$
%
%Finally, combining the results obtained in this section, we have the following result.
%\end{proof}

Now we proceed to study the sets $\Gamma_3(3,2)$, $\Gamma_5(3,2)$ and $\Gamma_7(3,2)$.

\begin{theorem}
For the sets $\Gamma_3(3,2)$, $\Gamma_5(3,2)$ and $\Gamma_7(3,2)$ the following results hold:
\begin{enumerate}
\item[i)]
$\{1,2\} \cap \Gamma_3(3,2) = \emptyset$ and $\{9, 10, 11\} \cup \{ k \in \mathbb{N} \colon k \geq 13\} \subset \Gamma_3(3,2)$;

\item[ii)]
$1 \notin \Gamma_5(3,2)$ and $\{ k \in \mathbb{N} \colon k \geq 7\} \subset \Gamma_5(3,2)$;

\item[iii)]
$1 \notin \Gamma_7(3,2)$ and $\{ k \in \mathbb{N} \colon k \geq 7\} \subset \Gamma_7(3,2)$.
\end{enumerate}

%
%	$1,2 \notin \Gamma_3(3,2)$,
%	$k \in \Gamma_3(3,2)$ para $k=9,10,11$ e $k \geq 13$;
%	$1 \notin \Gamma_5(3,2)$ e
%	$k \in \Gamma_5(3,2)$ para $k \geq 7$;
%	$1 \notin \Gamma_7(3,2)$ e
%	$k \in \Gamma_7(3,2)$ para $k \geq 7$.
\end{theorem}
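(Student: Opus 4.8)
The plan is to follow the same three-tier strategy used in the proof of Proposition \ref{proposition1}, now with $m_1+m_2=3+2=5$ and with $2$ replaced by each of $3,5,7$. For the negative part I would invoke Proposition \ref{noGamma}: computing $\phi(p^k-1)$ for small $k$ gives $\phi(3-1)=\phi(2)=1\le 6$ and $\phi(3^2-1)=\phi(8)=4\le 6$, which yield $\{1,2\}\cap\Gamma_3(3,2)=\emptyset$, while $\phi(5-1)=\phi(4)=2$ and $\phi(7-1)=\phi(6)=2$ give $1\notin\Gamma_5(3,2)$ and $1\notin\Gamma_7(3,2)$. One checks that these are the only $k$ with $\phi(p^k-1)\le 6$ (already $\phi(3^3-1)=\phi(26)=12$, $\phi(5^2-1)=\phi(24)=8$, $\phi(7^2-1)=\phi(48)=16$), so Proposition \ref{noGamma} produces exactly the stated exclusions.

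For the positive part I would first dispose of all large $k$ with Proposition \ref{cota-t} at $t=6$: by the Remark following it, $q=p^k\ge 1.2\times 10^{23}$ forces $k\in\Gamma_p(3,2)$, which translates into the thresholds $k\ge 49$ for $p=3$, $k\ge 34$ for $p=5$, and $k\ge 28$ for $p=7$. The finitely many remaining values I would attack with Corollary \ref{mainresult}, i.e.\ by verifying $p^{k/2}\ge 5\,W(p^k-1)^2$. Factoring $p^k-1$ with SAGEMATH, this crude bound already holds for the large majority of the intermediate range, namely those $k$ for which $p^k-1$ has few distinct prime factors; for example $5^7-1=2^2\cdot 19531$ and $3^{11}-1=2\cdot 23\cdot 3851$ both clear the inequality at once.

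The real work lies in the exceptional $k$ for which $W(p^k-1)$ is too large for Corollary \ref{mainresult}. There I would apply Lemma \ref{lema3.5}, choosing a divisor $\ell\mid p^k-1$ supported on the smallest prime factors so that the leftover primes $p_1,\dots,p_r$ satisfy $\delta=1-2\sum_i 1/p_i>0$ and the resulting $\Delta=(2r-1)/\delta+2$ makes $p^{k/2}\ge 5\,W(\ell)^2\Delta$ hold. The delicate point is that $W(\ell)$ enters quadratically: absorbing more small primes into $\ell$ shrinks $\{p_1,\dots,p_r\}$ (raising $\delta$, lowering $\Delta$) but inflates $W(\ell)^2$, so the optimal split must be found case by case. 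For instance, for $p=3,\,k=9$ one has $3^9-1=2\cdot 13\cdot 757$ and over-absorbing fails, but taking $\ell=2$ (so $r=2$, $\{p_i\}=\{13,757\}$, $\delta\approx 0.84$, $\Delta\approx 5.56$, $W(\ell)=2$) gives $3^{9/2}\approx 140.3\ge 5\cdot 4\cdot 5.56\approx 111$; similarly $k=10$ is handled by $\ell=2^3\cdot 11^2$ against the single leftover prime $61$, essentially at equality.

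The main obstacle is thus the exceptional-case analysis of this last step: for each bad $k$ in the finite intermediate range one must exhibit a working divisor $\ell$, which rests on the explicit factorization of $p^k-1$ together with a small optimization of $W(\ell)^2\Delta$. I expect that, exactly as for $p=2$ in Proposition \ref{proposition1}, a few values resist all three tools simultaneously; a representative case is $p=3,\,k=12$, where $3^{12}-1=2^4\cdot 5\cdot 7\cdot 13\cdot 73$ has so many small prime factors that every admissible $\ell$ fails the inequality of Lemma \ref{lema3.5}. These recalcitrant $k$ are precisely the ones omitted from the positive assertions, and the theorem claims nothing about them.
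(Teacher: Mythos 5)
Your proposal follows exactly the paper's strategy: Proposition \ref{noGamma} for the exclusions (via $\phi(p^k-1)\leq 6$), Proposition \ref{cota-t} with $t=6$ for the thresholds $k\geq 49,34,28$, Corollary \ref{mainresult} with explicit factorizations for the intermediate range, and Lemma \ref{lema3.5} for the leftover cases, and your sample computations (including the verdict that $p=3$, $k=12$ resists all three tools) check out. The only cosmetic difference is the choice of $\ell$ in a few sieve cases (e.g.\ the paper uses $\ell=2$ with $\{11,61\}$ for $p=3$, $k=10$, which gives more slack than your $\ell=2^3\cdot 11^2$), so this is essentially the paper's own proof.
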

\begin{proof}
From Proposition \ref{cota-t} and using $t=6$ we get that
$k \in \Gamma_3(3,2)$ for all $k \geq 49$,
$k \in \Gamma_5(3,2)$ for all $k \geq 34$ and
$k \in \Gamma_7(3,2)$ for all $k \geq 28$.
	
From Corollary \ref{mainresult}, for $k \geq 4$, if $p^{\frac{k}{2}}>5W(p^k-1)^2$ then
$k\in  \Gamma_p(3,2)$.
We used SAGEMATH (see \cite{SAGE}) to factor $p^k-1$
and conclude that $p^{\frac{k}{2}}>5W(p^k-1)^2$ for the following values of $k$.
\begin{table}[h]
	\centering
	\begin{tabular}{ccc}
		$p$ && $k \in \Gamma_p(3,2)$ \\
		\hline
		$3$ && $11 \leq k \leq 48$, except $k=12,18$   \\
		$5$ && $7 \leq k \leq 33$, except $k=8,10,12$  \\
		$7$ && $8 \leq k \leq 28$
	\end{tabular}
\caption{$k\in  \Gamma_p(3,2)$ using $p^{\frac{k}{2}}>5W(p^k-1)^2$}
\label{dadosp_3_2}
\end{table}

The next tables summarize the results obtained using Lemma \ref{lema3.5} for
$p=3,5,7$.
\begin{table}[h]
	\centering
	\begin{tabular}{ccc}
		$k$ & $\ell$ & $\{p_1,p_2,\ldots ,p_r\}$ \\
		\hline
		$9$   & $2$ & $\{ 13,757\}$  \\
		$10$ & $2$  & $\{ 11,61\}$  \\
		$18$ & $2$  & $\{ 7,13,19,37,757\}$
	\end{tabular}
\caption{For $p=3$ we get $9,10,18 \in \Gamma_3(3,2)$}
\end{table}

\begin{table}[h]
	\centering
	\begin{tabular}{ccc}
		$k$ & $\ell$ & $\{p_1,p_2,\ldots ,p_r\}$ \\
		\hline
		$7$   & $2$ & $\{ 19531\}$  \\
		$8$   & $2$ & $\{ 3,13,313\}$  \\
		$10$ & $2$  & $\{ 3,11,71,521\}$  \\
		$12$ & $2$  & $\{7,13,31,601\}$
	\end{tabular}
\caption{For $p=5$ we get $7,8,10,12 \in \Gamma_5(3,2)$}
\end{table}

\begin{table}[h]
	\centering
	\begin{tabular}{ccc}
		$k$ & $\ell$ & $\{p_1,p_2,\ldots ,p_r\}$ \\
		\hline
		$7$   & $2\cdot 3$ & $\{ 29,43 \}$  \\
	\end{tabular}
	\caption{For $p=7$ we get $7 \in \Gamma_7(3,2)$}
\end{table}

Finally from Proposition \ref{noGamma} we get that
$1,2 \notin \Gamma_3(3,2)$, $1 \notin \Gamma_5(3,2)$ and $1 \notin \Gamma_7(3,2)$.
\end{proof}

\end{document}